\newcommand\Sing{\textup{Sing\,}}
\newcommand\codim{\textup{codim\,}}
\newcommand\rank{\textup{rank\,}}
\newcommand\GL{\textup{GL}}
\theoremstyle{plain}
\newtheorem{Thm}{Theorem}[section]
\newtheorem{Prop}[Thm]{Proposition}
\newtheorem{Lem}[Thm]{Lemma}
\theoremstyle{definition}
\newtheorem{Def}[Thm]{Definition}
\newtheorem{Notation}[Thm]{Notation}
\theoremstyle{remark}
\newtheorem{Rmk}[Thm]{Remark}
\newtheorem{Ex}[Thm]{Example}
\crefname{Ex}{Example}{Examples}
\crefname{Prop}{Proposition}{Propositions}
\begin{document}

\title[Determinantal characterization]{Determinantal characterization of higher secant varieties of minimal degree}
\author[Junho Choe and Sijong Kwak]{Junho Choe and Sijong Kwak${}^*$}
\address{Junho Choe \\
School of Mathematics, Korea Institute for Advanced Study (KIAS), 85 Hoegiro Dongdaemun-gu, Seoul 02455, Republic of Korea}
\email{junhochoe@kias.re.kr}
\address{Sijong Kwak\\
Department of Mathematics, Korea Advanced Institute of Science and Technology (KAIST), 373-1 Gusung-dong, Yusung-Gu, Daejeon, Republic of Korea}
\email{sjkwak@kaist.ac.kr}
\thanks{${}^*$ Corresponding author.}

\date{\today}

\begin{abstract}
A variety of minimal degree is one of the basic objects in projective algebraic geometry and has been classified and characterized in many aspects. On the other hand, there are also minimal objects in the category of higher secant varieties, and their algebraic and geometric structures seem to share many similarities with those of varieties of minimal degree.

We prove in this paper that higher secant varieties of minimal degree have determinantal presentation of two types, i.e.\ scroll type and Veronese type. Our result generalizes the del Pezzo-Bertini classification for varieties of minimal degree. Also, as a consequence, we show that for any smooth projective variety having higher secant variety of minimal degree, the embedding line bundle admits a special decomposition into two line bundles as so do those of the well-known examples: varieties of minimal degree, smooth del Pezzo varieties, Segre varieties and $2$-Veronese varieties.
\end{abstract}

\keywords{varieties of minimal degree, the del Pezzo-Bertini classification, higher secant varieties of minimal degree, determinantal presentation, $1$-generic matrix}
\subjclass[2020]{Primary:~14N05; Secondary:~13D02,~14N25}
\maketitle
\tableofcontents \setcounter{page}{1}

\section{Introduction}

Throughout this paper, we work over an algebraically closed field of characteristic zero, let $X$ be a nondegenerate projective (irreducible and reduced) variety in $\mathbb P^r$, and take $q$ to be a positive integer. The {\it $q$-secant variety} $S^q(X)\subseteq\mathbb P^r$ to $X$ is the Zariski closure of the set of points spanned by $q$ linearly independent points in $X$. From now on, write $e=\codim S^q(X)$ for convenience.

As for projective varieties, it is very interesting to find a degree lower bound for higher secant varieties in terms of the codimension $e$. The following is known due to Ciliberto-Russo: The inequality
$$
\deg S^q(X)\geq\binom{e+q}{q}
$$
holds by \cite[Theorem 4.2]{ciliberto2006varieties} and is sharp for each pair $(e,q)$. They also studied various projective varieties whose degree of the $q$-secant variety attains the minimum. One says that $S^q(X)$ is a {\it $q$-secant variety of minimal degree} if the equality above holds, which generalizes a classical object, namely a {\it variety of minimal degree}. 

Higher secant varieties of minimal degree display many parallels with varieties of minimal degree. They admit natural generalizations of the following results about varieties of minimal degree: (1) a theorem on the number of quadrics by G.\ Castelnuovo \cite{castelnuovo1889ricerche}, (2) the $K_{p,1}$ theorem of M. Green \cite{green1984koszul}, (3) a characterization of $2$-regular projective varieties due to Eisenbud-Goto \cite{eisenbud1984linear}, and (4) the rigidity of property $N_p$ introduced by Eisenbud-Green-Hulek-Popescu \cite{eisenbud2005restricting}.
For details, refer to \cite[Theorem 1.1]{choe2022matryoshka}.

On the other hand, for the classical case $q=1$, the {\it del Pezzo-Bertini classification} (see \cite[Theorem 1]{eisenbud1987varieties}) shows that a variety of minimal degree is either
\begin{enumerate}
\item a rational normal scroll; or

\item a cone over the $2$-Veronese surface $\nu_2(\mathbb P^2)\subset\mathbb P^5$
\end{enumerate} 
as long as it has codimension greater than one. Notice that a rational normal scroll $S(a_1,\ldots,a_n)$ is defined by $2$-minors of a matrix $M$ of linear forms, or simply a {\it linear matrix}, that is {\it $1$-generic}, where
$$
M=
\begin{pmatrix}
x_{1,0}&x_{1,1}&\cdots&x_{1,a_1-1}&\cdots&x_{n,0}&x_{n,1}&\cdots&x_{n,a_n-1}\\
x_{1,1}&x_{1,2}&\cdots&x_{1,a_1}&\cdots&x_{n,1}&x_{n,2}&\cdots&x_{n,a_n}
\end{pmatrix},
$$
and similarly, the generic symmetric linear matrix 
$$
\begin{pmatrix}
x_{0,0}&x_{0,1}&x_{0,2}\\
x_{0,1}&x_{1,1}&x_{1,2}\\
x_{0,2}&x_{1,2}&x_{2,2}
\end{pmatrix}
$$
works for the $2$-Veronese surface. That is, any variety of minimal degree has {\it determinantal presentation} if its codimension is at least $2$.

In this paper, we generalize the del Pezzo-Bertini classification to higher secant varieties of minimal degree in view of determinantal presentation. Let us introduce some terminology. Take $M$ to stand for a $1$-generic linear matrix. If there is a $(q+1)\times(p+q)$ (resp.\ $(q+2)\times(q+2)$ symmetric) linear matrix $M$ whose $2$-minors vanish on $X$, and if $e=p$ (resp.\ $e=3$), then the set of $(q+1)$-minors of $M$ defines $S^q(X)$ ideal-theoretically; in this case, one says that $S^q(X)$ is of {\it scroll type} (resp.\ {\it Veronese type}), and $M$ is a {\it determinantal presentation} of $S^q(X)$. (The scroll type can appear for any positive codimension $e\geq 1$ while any $q$-secant variety of Veronese type must have codimension $e=3$.) Our main result is as follows:
\begin{Thm}\label{detpre}
Suppose that $S^q(X)$ is a $q$-secant variety of minimal degree with codimension $e\geq 2$. Then, $S^q(X)$ is of either
\begin{enumerate}
\item scroll type; or

\item Veronese type with $e=3$.
\end{enumerate}
When $e=3$, the $q$-secant variety $S^q(X)$ is of Veronese type, but not of scroll type if and only if a general $(q-1)$-tangential projection of $X$ is a cone over the $2$-Veronese surface $\nu_2(\mathbb P^2)\subset\mathbb P^5$.
\end{Thm}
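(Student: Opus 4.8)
The strategy is to transport the scroll/Veronese alternative through the general $(q-1)$-tangential projection $\pi\colon X\dashrightarrow X'$ that underlies the proof of parts (1) and (2). That reduction produces a variety $X'$ of minimal degree in codimension $e=3$, to which the classical del Pezzo-Bertini theorem applies: $X'$ is either a rational normal scroll or a cone over $\nu_2(\mathbb P^2)\subset\mathbb P^5$, and these two families are disjoint. The plan is to show that the determinantal type of $S^q(X)$ coincides with that of $X'$ under $\pi$; granting this, the asserted equivalence is immediate from the base case $q=1$, where the Veronese type is presented by a generic symmetric $3\times 3$ matrix whose $2$-minors cut out exactly the rank-$1$ locus $\nu_2(\mathbb P^2)$ or a cone over it.

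For the correspondence I would argue in two directions. \emph{Descent:} a $(q+2)\times(q+2)$ symmetric (resp.\ a $(q+1)\times(q+3)$) $1$-generic matrix presenting $S^q(X)$ collapses, under projection from the span of the $q-1$ general tangent spaces, to a $3\times 3$ symmetric (resp.\ a $2\times 4$) $1$-generic matrix presenting $X'$; here the $q-1$ projected tangent spaces correspond precisely to the $q-1$ rows and columns that are removed, and one checks the surviving $2$-minors define $X'$. \emph{Lift:} conversely, the presentation on $X'$ extends to one on $S^q(X)$ by re-adjoining those $q-1$ rows and columns, which is exactly the construction carried out in parts (1) and (2). In particular the symmetric case lifts to the symmetric case, so $S^q(X)$ is of Veronese (resp.\ scroll) type precisely when $X'$ is a Veronese cone (resp.\ a scroll).

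Combining the two directions with the del Pezzo-Bertini dichotomy yields both the equivalence and the exclusivity. If $S^q(X)$ were simultaneously of scroll and of Veronese type, then by descent $X'$ would be at once a rational normal scroll and a cone over $\nu_2(\mathbb P^2)$, contradicting disjointness; hence the two types are mutually exclusive for $S^q(X)$, and $S^q(X)$ is of Veronese but not scroll type if and only if $X'$ is of Veronese but not scroll type, i.e.\ if and only if $X'$ is a cone over $\nu_2(\mathbb P^2)$. That such a cone is never of scroll type is precisely the assertion that it is not a rational normal scroll, which is part of del Pezzo-Bertini.

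The main obstacle is the lift, and specifically the preservation of symmetry in it: one must verify that re-adjoining the $q-1$ rows and columns dictated by the tangential fibers turns a symmetric $3\times 3$ matrix into a \emph{symmetric} $(q+2)\times(q+2)$ matrix that is still $1$-generic and whose $(q+1)$-minors recover all of $S^q(X)$ rather than a proper subvariety. This is exactly where the equality $\deg S^q(X)=\binom{e+q}{q}$ enters: it forces the $q-1$ general tangent spaces to meet $X$ along linear spaces of the expected dimension, so that the adjoined rows and columns are determined, up to the $\GL$-action, by the linear forms cutting out those spaces and can be arranged symmetrically. Once this compatibility is checked, the scroll-versus-Veronese alternative for $S^q(X)$ follows formally from the same alternative for $X'$.
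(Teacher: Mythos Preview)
Your overall orientation is right and matches the paper's: the dichotomy is inherited from del Pezzo--Bertini for the general $(q-1)$-tangential projection $X_\Lambda$, and the descent direction (a scroll or Veronese presentation of $S^q(X)$ restricts to one for $X_\Lambda$) is exactly \Cref{operate}. But the lift is where all of the content lies, and your proposal does not supply one. You assert that the $q-1$ rows and columns to be re-adjoined are ``determined, up to the $\GL$-action, by the linear forms cutting out'' the tangent spaces and ``can be arranged symmetrically,'' with the minimal-degree hypothesis doing the work. But you give no mechanism for producing those entries as linear forms on the ambient $\mathbb P^r$ such that the resulting matrix is $1$-generic with $2$-minors vanishing on $X$, and no argument for why symmetry survives. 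The sentence about tangent spaces ``meeting $X$ along linear spaces of the expected dimension'' does not by itself furnish such entries.

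The paper carries out the lift by a different scheme. It does \emph{not} jump from $X_\Lambda$ back to $S^q(X)$ in one stroke; it lifts one row/column at a time, and at each step it glues the data coming from \emph{two} inner projections $X_z,X_w$ (and, in the $e=3$ scroll and the Veronese cases, additionally from two tangential projections $X_{\mathbb T_zX},X_{\mathbb T_wX}$) under explicit compatibility conditions: see \Cref{scrollglue,scrollglueII,Veroneseglue}. What forces the pieces to agree on their overlap is a uniqueness statement for $\Gamma$-regular presentations (\Cref{uniqueness}), and in the Veronese case the auxiliary map $D^z_w$ is precisely the device that pins down the off-diagonal block and preserves symmetry. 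Moreover, the induction does not bottom out only at $q=1$: the line $e=2$ (for all $q$) is a separate base case, handled via the Hilbert--Burch theorem applied to the known linear minimal resolution of $I_{S^q(X)}$ together with a Koszul-cohomology lemma (\Cref{wild}) guaranteeing that the Hilbert--Burch matrix is $1$-generic with $I_2(M)\subseteq I_X$. None of these ingredients is a consequence of the degree equality alone, and your sketch does not indicate how they would be replaced.
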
 

Keep in mind that a general $(q-1)$-tangential projection of $X$ always has minimal degree when $S^q(X)$ has minimal degree \cite[Theorem 4.2(iv)]{ciliberto2006varieties}. Refer to an elementary observation (see \Cref{operate}) that for a general point $z\in X$, if $S^q(X)$ is of scroll type, then 
\begin{enumerate}
\item $S^q(X_z)$ is of scroll type when $e\geq 2$; and 
\item $S^{q-1}(X_{\mathbb T_zX})$ is of scroll type when $q\geq 2$,
\end{enumerate}
and on the other hand, if $S^q(X)$ is of Veronese type, then 
\begin{enumerate}
\item $S^q(X_z)$ is of scroll type; and 
\item $S^{q-1}(X_{\mathbb T_zX})$ is of Veronese type when $q\geq 2$.
\end{enumerate}

Moreover, one can see that these determinantal presentations provide a complete description of the minimal free resolution of $S^q(X)$: The Eagon-Northcott complex \cite{eagon1962ideals} gives the minimal free resolution for higher secant varieties of scroll type, and so does the J{\'o}zefiak complex \cite{jozefiak1978ideals} for higher secant varieties of Veronese type. So, as already observed in \cite[Theorem 1.1(2)]{choe2022matryoshka}, the minimal free resolution of the homogeneous ideal of $S^q(X)$ must be linear of weight $q+1$.

Further, \Cref{detpre} can be delivered in a more intrinsically geometric way by the following theorem:

\begin{Thm}\label{detpresm}
Suppose that $S^q(X)$ has minimal degree of codimension $e\geq 2$, and that $X$ is smooth. Write $L=\mathcal O_X(1)$, and let $|V|\subseteq|L|$ be the embedding linear system of $X\subset\mathbb P^r$. Then, there are two linear systems $|V_i|\subseteq|L_i|$ on $X$ and an effective divisor $B\subset X$ such that
\begin{enumerate}
\item we have a decomposition
$$
L(-B)=L_1\otimes L_2
$$
and a well-defined multiplication map $V_1\otimes V_2\to V$; and

\item either
\begin{enumerate}[ \normalfont (a)]
\item we obtain $\dim|V_1|=q$ and $\dim|V_2|=e+q-1$; or

\item both $|V_i|\subseteq|L_i|$ coincide together with $\dim|V_i|=q+1$ and $e=3$; and
\end{enumerate}

\item each of $|V_i|$ has no fixed components.
\end{enumerate}
\end{Thm}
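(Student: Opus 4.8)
The plan is to treat \Cref{detpre} as a black box and then convert the determinantal presentation it supplies into the asserted line-bundle decomposition, using crucially that $X$ is smooth. By \Cref{detpre}, after a choice of coordinates we are handed a $1$-generic linear matrix $M=(m_{ij})$ of forms on $\mathbb P^r$: in the scroll case $M$ is $(q+1)\times(e+q)$, and in the Veronese case $M$ is symmetric of size $(q+2)\times(q+2)$ with $e=3$; in both cases the $2$-minors of $M$ vanish on $X$ while the $(q+1)$-minors cut out $S^q(X)$. Writing $L=\mathcal O_X(1)$ and restricting each entry to $X$, I regard $m_{ij}$ as a section of $L$ lying in the embedding space $V\subseteq H^0(L)$; the hypothesis that every $2$-minor vanishes on $X$ says precisely that $m_{ij}m_{kl}=m_{il}m_{kj}$ in $H^0(L^{\otimes 2})$, i.e.\ the matrix of sections $M|_X$ has pointwise rank at most $1$.

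The heart of the argument is to factor this rank-$\le 1$ matrix. The $e+q$ columns of $M|_X$, viewed as sections of $L^{\oplus(q+1)}$, are everywhere proportional, so they generate a rank-$1$ subsheaf of $L^{\oplus(q+1)}$; since $X$ is smooth (hence locally factorial), the saturation of this subsheaf is a line bundle $N$. The inclusion $N\hookrightarrow L^{\oplus(q+1)}$ furnishes $q+1$ sections $s_1,\dots,s_{q+1}$ of $L_1':=L\otimes N^{-1}$, while the column map $\mathcal O_X^{\oplus(e+q)}\to N$ furnishes $e+q$ sections $t_1,\dots,t_{e+q}$ of $L_2':=N$, with $m_{ij}=s_it_j$ in $H^0(L_1'\otimes L_2')=H^0(L)$. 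Absorbing the fixed divisors of $\langle s_i\rangle$ and of $\langle t_j\rangle$ into a single effective divisor $B$ and dividing them out, I obtain line bundles $L_1,L_2$ with $L(-B)=L_1\otimes L_2$, base-component-free linear systems $V_1=\langle s_i\rangle\subseteq H^0(L_1)$ and $V_2=\langle t_j\rangle\subseteq H^0(L_2)$, and the composite $V_1\otimes V_2\to H^0(L(-B))\to V$ (multiplying back by the section cutting out $B$) recovering the $m_{ij}$; this yields (1) and (3). For the dimensions in (2)(a), $1$-genericity of $M$ together with the nondegeneracy of $X$ forces the $s_i$ (resp.\ the $t_j$) to be linearly independent: a vanishing relation among them would produce a generalized row (resp.\ column) of $M$ all of whose entries are nonzero linear forms vanishing on the nondegenerate $X$, which is impossible. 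Hence $\dim|V_1|=q$ and $\dim|V_2|=e+q-1$.

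In the Veronese case the same construction applies, but the symmetry $m_{ij}=m_{ji}$ must be propagated to the factorization. From $s_it_j=s_jt_i$ one reads off that $s_i$ and $t_i$ differ by a rational function $\lambda$ independent of $i$; tracking $\lambda$ through the line-bundle bookkeeping should identify $L_1$ with $L_2$ and $V_1$ with $V_2$ (so that $|V_1|=|V_2|$), exhibiting $L(-B)$ as a square $L_1^{\otimes 2}$, and with $\dim V_i=q+2$ this is exactly (2)(b). I expect the principal obstacle to lie here: extracting a genuine symmetric factorization $m_{ij}=u_iu_j$ from a symmetric rank-$1$ matrix amounts to producing a square root of $L(-B)$ in $\operatorname{Pic}(X)$, and one must choose this consistently across $X$ rather than settling for the raw relation $m_{ij}=\lambda\,t_it_j$ with $\lambda$ merely rational. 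I anticipate resolving this by comparing with the $2$-Veronese structure itself — the embedding bundle of $\nu_2(\mathbb P^2)$ is the manifest square $\mathcal O(1)^{\otimes 2}$, and the Veronese-type hypothesis is exactly tied to a general $(q-1)$-tangential projection being a cone over $\nu_2(\mathbb P^2)$, so the square root can be transported back along this projection. The secondary subtlety, present in both cases, is the bookkeeping of $B$: one must verify that pulling out all fixed components of the two systems simultaneously still yields a clean decomposition $L(-B)=L_1\otimes L_2$ with both $|V_i|$ free of fixed components, and that the products $s_it_j$ continue to land in the embedding system $V$ so that the multiplication map is well defined.
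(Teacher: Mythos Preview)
Your approach is exactly the paper's: invoke \Cref{detpre} to obtain $M$ and then factor the rank-$\le 1$ restriction $M|_X$ as $m_{ij}=s_it_ju$ using smoothness, which the paper isolates as a separate lemma (\Cref{M2D}) proved via the associated rational map rather than via sheaf saturation---but these are the same construction. One bookkeeping remark: your saturation of the column span already forces the $s_i$ to have base locus of codimension $\ge 2$ (a common divisorial zero $D$ would give $N(D)\hookrightarrow L^{\oplus(q+1)}$ with torsion quotient over $N$, contradicting saturatedness), so $B$ comes only from the $t_j$ side, as in the paper.

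Your anticipated ``principal obstacle'' in the symmetric case is not one, and the detour through tangential projections is unnecessary. Once both tuples $(s_i)$ and $(t_i)$ have base locus of codimension $\ge 2$, they present the \emph{same} rational map $X\dashrightarrow\mathbb P^{q+1}$ (this is what $m_{ij}=m_{ji}$ says), and on a smooth variety such a presentation is unique up to a single global scalar (cf.\ \cite[p.~492]{griffiths1978principles}); hence $L_1=L_2$ and $s_i=\alpha t_i$ immediately. No separate square-root argument in $\operatorname{Pic}(X)$ is required.
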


Here the divisor $B$ has a role as in \Cref{nonempty}, and basic examples of the decomposition are the following:
\begin{itemize}
\item $\mathcal O_S(H)=\mathcal O_S(qF)\otimes\mathcal O_S(H-qF)$ for any rational normal scroll $S=S(a_1,\ldots,a_n)$ with two of the $a_i$ larger than or equal to $q$;

\item $\mathcal O_{\mathbb P^2}(3)=\mathcal O_{\mathbb P^2}(1)\otimes\mathcal O_{\mathbb P^2}(2)$ for the del Pezzo surface $\nu_3(\mathbb P^2)\subset\mathbb P^9$ with $q=2$;

\item $\mathcal O_{\mathbb P^q\times\mathbb P^b}(1,1)=\mathcal O_{\mathbb P^q\times\mathbb P^b}(1,0)\otimes\mathcal O_{\mathbb P^q\times\mathbb P^b}(0,1)$ for the Segre variety $\sigma(\mathbb P^q\times\mathbb P^b)\subset\mathbb P^{(q+1)(b+1)-1}$; and

\item $\mathcal O_{\mathbb P^{q+1}}(2)=\mathcal O_{\mathbb P^{q+1}}(1)\otimes\mathcal O_{\mathbb P^{q+1}}(1)$ for the $2$-Veronese variety $\nu_2(\mathbb P^{q+1})\subset\mathbb P^{\binom{q+3}{2}-1}$.
\end{itemize}
Consult with \Cref{RNS,,dP,,Seg,,2V}. The first three correspond to the case (a), but the last one corresponds to the case (b). \Cref{detpresm} still holds in some sense when $X$ is singular; see \Cref{singular}.

The proof of \Cref{detpre} is performed by an inductive process, and the process is based on the gluing of linear matrices. When constructing a desired linear matrix, we shall consider linear matrices already obtained for general inner and tangential projections of $X$ according to the induction hypothesis, and glue them together by taking account of common block matrices (refer to \Cref{scrollglue,scrollglueII,Veroneseglue}).

This paper is organized as follows: In \Cref{HSV}, we recall the definition of higher secant varieties of minimal degree, and in \Cref{matrix}, we discuss linear matrices, especially gluing procedures of them, in relation with equations of higher secant varieties. Finally, we prove our main theorems in \Cref{proofs}, and put comments on our results in \Cref{comments}.

\bigskip

\section{Preliminaries on higher secant varieties}\label{HSV}

The {\it $q$-secant variety} to $X$ is defined to be
$$
S^q(X)=\overline{\bigcup\langle z_1,\ldots,z_q\rangle}\subseteq\mathbb P^r,
$$
where the union runs over all of the $q$ linearly independent points $z_i$ in $X$. To study $q$-secant varieties for any $q\geq 1$, namely {\it higher secant varieties}, we frequently use {\it linear projections} since they have many interactions with higher secant varieties. For basic notions, facts and notations about linear projections, consult with \cite{choe2022matryoshka}.

As mentioned in the introduction, we are interested in higher secant varieties of minimal degree. 

\begin{Def}[{cf.\ Ciliberto-Russo \cite[Definition 4.4]{ciliberto2006varieties}}]
We say that $S^q(X)$ is a {\it $q$-secant variety of minimal degree} if its degree attains the minimum, that is,
$$
\deg S^q(X)=\binom{e+q}{q}.
$$
\end{Def}

\begin{Ex}\label{fundaEx}
If $X$ is one of the projective varieties below, then $S^q(X)$ has minimal degree:
\begin{enumerate}
\item rational normal scrolls for $q\geq 1$;

\item smooth del Pezzo surfaces for $q\geq 2$;

\item Segre varieties $\sigma(\mathbb P^a\times\mathbb P^b)$ for $q\geq\min\{a,b\}$; and

\item $2$-Veronese varieties $\nu_2(\mathbb P^n)$ for $q\geq n-1$.
\end{enumerate}
For more examples, see \cite{ciliberto2006varieties}.
\end{Ex}

The following are known results on higher secant varieties of minimal degree:

\begin{Prop}[{Ciliberto-Russo \cite[Theorem 4.2]{ciliberto2006varieties}}]
If $S^q(X)$ is a $q$-secant variety of minimal degree, then for a general point $z\in X$,
\begin{enumerate}
\item $S^q(X_z)$ is a $q$-secant variety of minimal degree; and

\item $S^{q-1}(X_{\mathbb T_zX})$ is a $(q-1)$-secant variety of minimal degree.
\end{enumerate}
\end{Prop}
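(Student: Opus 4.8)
The plan is to reduce the statement to a single degree recursion relating $\deg S^q(X)$ to the degrees of the two projections, and then to play this identity against the sharp lower bound $\deg S^q(Y)\ge\binom{\codim S^q(Y)+q}{q}$ together with Pascal's rule. Write $n=\dim X$ and fix a general point $z=x_1$ of $X$. A quick set-theoretic argument gives $\pi_z(S^q(X))=S^q(X_z)$: the image of a secant $\langle x_1,\dots,x_q\rangle$ under projection from $z$ is the span of the projected points, and conversely every such span lifts, so the two closures agree. Since $e\ge 1$ forces $S^q(X)$ not to be a cone with vertex the general point $z$, projection from $z$ preserves dimension, whence $\dim S^q(X_z)=\dim S^q(X)$. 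On the tangential side, Terracini's lemma applied to $X$ and to $X_{\mathbb T_zX}$ yields $\dim S^{q-1}(X_{\mathbb T_zX})=\dim\langle\mathbb T_{x_1}X,\dots,\mathbb T_{x_q}X\rangle-(n+1)=\dim S^q(X)-(n+1)$. Reading these inside $\mathbb P^{r-1}$ and $\mathbb P^{r-n-1}$ gives $\codim S^q(X_z)=e-1$ and $\codim S^{q-1}(X_{\mathbb T_zX})=e$. As $X_z$ and $X_{\mathbb T_zX}$ are nondegenerate, so are their secant varieties, and the cited inequality produces
$$
\deg S^q(X_z)\ge\binom{e-1+q}{q},\qquad \deg S^{q-1}(X_{\mathbb T_zX})\ge\binom{e+q-1}{q-1}.
$$

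The geometric core is the degree recursion. Projecting the irreducible variety $Y=S^q(X)$ from the point $z\in Y$ and intersecting with a general linear space $\Lambda$ of codimension $\dim Y$ through $z$ gives the classical formula
$$
\deg S^q(X)=\delta\cdot\deg S^q(X_z)+\operatorname{mult}_z S^q(X),
$$
where $\delta\ge 1$ is the degree of $\pi_z|_{S^q(X)}$: indeed $z$ contributes $\operatorname{mult}_zS^q(X)$ to $\#(Y\cap\Lambda)=\deg Y$, while the remaining points map $\delta$-to-one onto $\pi_z(Y)\cap\pi_z(\Lambda)$. The remaining — and hardest — ingredient is to bound $\operatorname{mult}_zS^q(X)$ from below by $\deg S^{q-1}(X_{\mathbb T_zX})$. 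For this I would analyze the projectivized tangent cone $\mathbb P C_zS^q(X)\subseteq\mathbb P^{r-1}$, whose degree is exactly $\operatorname{mult}_zS^q(X)$: letting one of the $q$ generating points move within $X$ toward $z$ while the others stay general produces, in the limit, the cone with vertex $\mathbb P(T_z\mathbb T_zX)\cong\mathbb P^{n-1}$ over $S^{q-1}(X_{\mathbb T_zX})$. This cone is a subvariety of $\mathbb P C_zS^q(X)$ of the same dimension $\dim S^q(X)-1$ and of degree $\deg S^{q-1}(X_{\mathbb T_zX})$, so $\operatorname{mult}_zS^q(X)\ge\deg S^{q-1}(X_{\mathbb T_zX})$.

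Finally I would assemble these. Using $\delta\ge 1$ together with the two lower bounds,
$$
\binom{e+q}{q}=\deg S^q(X)\ge\deg S^q(X_z)+\operatorname{mult}_zS^q(X)\ge\binom{e-1+q}{q}+\binom{e+q-1}{q-1}.
$$
Since $\binom{e-1+q}{q}+\binom{e+q-1}{q-1}=\binom{e+q-1}{q}+\binom{e+q-1}{q-1}=\binom{e+q}{q}$ by Pascal's identity, every inequality must be an equality; in particular $\deg S^q(X_z)=\binom{e-1+q}{q}$ and $\deg S^{q-1}(X_{\mathbb T_zX})=\binom{e+q-1}{q-1}$, so both projections have minimal degree (the tangential statement being vacuous when $q=1$). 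The step I expect to be the main obstacle is the multiplicity estimate: identifying the tangent cone of $S^q(X)$ at the general point $z\in X$ with the cone over $S^{q-1}(X_{\mathbb T_zX})$ and verifying rigorously that it is pure of the expected dimension and contains this cone with at least its degree. By comparison, the identification $\pi_z(S^q(X))=S^q(X_z)$ and the bound $\delta\ge 1$ are routine.
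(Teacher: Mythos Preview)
The paper does not supply a proof of this proposition; it is stated with attribution to Ciliberto--Russo \cite[Theorem~4.2]{ciliberto2006varieties} and used as input. So there is no in-paper argument to compare against; your sketch is in fact a reconstruction of the Ciliberto--Russo proof itself: the projection formula $\deg S^q(X)=\delta\cdot\deg S^q(X_z)+\operatorname{mult}_zS^q(X)$, the identification of the tangent cone of $S^q(X)$ at a general $z\in X$ with a cone over $S^{q-1}(X_{\mathbb T_zX})$, and the Pascal identity squeeze. Your codimension computations, the equality $\pi_z(S^q(X))=S^q(X_z)$, and the ``$\delta\ge 1$'' step are all correct.

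Your self-diagnosis is accurate: the only real content is the multiplicity estimate. Two remarks that would tighten it. First, your degeneration actually produces tangent directions of the form $\lambda v+\sum_{i\ge 2}\mu_i\bar x_i$ with $v\in T_zX$ and $x_2,\dots,x_q\in X$ general; these sweep out the cone with vertex $\mathbb P(T_z\mathbb T_zX)$ over $S^{q-1}(X_z)$, and one should note that this cone \emph{coincides} with the cone over $S^{q-1}(X_{\mathbb T_zX})$ because coning over a vertex $V$ depends only on $\pi_V$ of the base. Second, once this cone sits set-theoretically inside $\mathbb P C_zS^q(X)$, equidimensionality of the tangent cone forces it to be a component (possibly with multiplicity), so $\operatorname{mult}_zS^q(X)=\deg\mathbb P C_zS^q(X)\ge\deg S^{q-1}(X_{\mathbb T_zX})$ follows without knowing the tangent cone is reduced. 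With those two points made explicit, your argument is complete and matches the source.
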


\begin{Rmk}
In fact, its converse holds in the following manner: Let $z\in X$ stand for a general point. Suppose that either
\begin{enumerate}
\item $S^q(X_z)$ has minimal degree with $e\geq2$ and $q\geq 1$; or

\item $S^{q-1}(X_{\mathbb T_zX})$ has minimal degree with $e\geq1$ and $q\geq 2$.
\end{enumerate}
Then, $S^q(X)$ has minimal degree. Refer to \cite[Theorem 1.4]{choe2022matryoshka}.
\end{Rmk}

\bigskip

\section{Linear matrices and higher secant varieties}\label{matrix}

In what follows, we use the following:

\begin{Notation}\label{notation}
\phantom{...}
\begin{itemize}
\item For a linear matrix $M$ (i.e.\ a matrix of linear forms), $I_s(M)$ is the ideal generated by its $s$-minors.

\item The labels added to a matrix indicate the size of submatrices; for example, the submatrix $M$ of an $(a+1)\times(b+1)$ matrix
$$
\begin{blockarray}{ccc}
&1&b\\
\begin{block}{c(cc)}
1&\ast&\ast\\
a&\ast&M\\
\end{block}
\end{blockarray}
$$
has size $a\times b$.

\item For matrices, the entry subscripts start with zero; for example, the entry $M_{0,0}$ of a matrix $M$ lies in the first row and the first column.

\item For convenience, $E^{a,b}$ stands for various matrices over the base field whose $(i,j)$-entry $E^{a,b}_{i,j}$ is zero except $E^{a,b}_{a,b}\neq 0$.

\item We denote by $\GL(a)$ the general linear group of degree $a$ over the base field together with $\GL(a,b)=\GL(a)\times\GL(b)$ for simplicity.

\item We write $S$ for the homogeneous coordinate ring of $\mathbb P^r$, and $I_X$ for the homogeneous ideal of $X\subseteq\mathbb P^r$.

\item For a point $z\in\mathbb P^r$, we mean by $\overline{z}$ its image under a projection map if the projection is well understood.
\end{itemize}
\end{Notation}

A well-known connection exists between linear matrices and equations of higher secant varieties.

\begin{Prop}
Let $M$ be a linear matrix. If $\rank M(z)\leq\rho$ for every $z\in X$, then $\rank M(w)\leq q\cdot\rho$ for every $w\in S^q(X)$. In other words, $I_s(M)\subseteq I_X$ implies $I_{q(s-1)+1}(M)\subseteq I_{S^q(X)}$.
\end{Prop}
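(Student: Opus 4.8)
The plan is to exploit the linearity of the entries of $M$ together with the subadditivity of matrix rank, and then to pass to the Zariski closure by semicontinuity. First I would record that the two formulations are equivalent translations, so it suffices to prove the rank statement: the hypothesis $I_s(M)\subseteq I_X$ says precisely that every $s$-minor of $M$ vanishes at each point of $X$, i.e.\ $\rank M(z)\leq s-1=:\rho$ for all $z\in X$; and likewise the conclusion $I_{q(s-1)+1}(M)\subseteq I_{S^q(X)}$ is exactly the assertion that $\rank M(w)\leq q(s-1)=q\rho$ for all $w\in S^q(X)$, since $\rank M(w)\leq q\rho$ holds iff all $(q\rho+1)$-minors of $M$ vanish at $w$.

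Next I would work in the affine cone over $\mathbb P^r$. Because each entry of $M$ is a linear form, the assignment $v\mapsto M(v)$ is linear on $\mathbb A^{r+1}$; in particular, if $w$ lies in the span $\langle z_1,\dots,z_q\rangle$ of $q$ points of $X$, then choosing cone representatives $\tilde z_i$ we may write $w=\sum_{i=1}^q\lambda_i\tilde z_i$ and hence
$$
M(w)=\sum_{i=1}^q\lambda_i\,M(\tilde z_i).
$$
Since rank is unaffected by scaling and $\rank M(\tilde z_i)=\rank M(z_i)\leq\rho$ by hypothesis, subadditivity of rank yields
$$
\rank M(w)\leq\sum_{i=1}^q\rank\bigl(\lambda_i M(\tilde z_i)\bigr)\leq\sum_{i=1}^q\rank M(\tilde z_i)\leq q\rho.
$$
This establishes the bound on the set of points of $S^q(X)$ genuinely spanned by $q$ linearly independent points of $X$.

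Finally I would extend the bound to all of $S^q(X)$. The condition $\rank M(w)\leq q\rho$ is Zariski-closed, being cut out by the vanishing of all $(q\rho+1)$-minors of $M$; since it holds on the set of genuinely spanned points, whose closure is $S^q(X)$ by definition, it holds on all of $S^q(X)$.

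There is essentially no hard part: the whole argument rests on the identity $M\bigl(\sum\lambda_i\tilde z_i\bigr)=\sum\lambda_i M(\tilde z_i)$, which is available precisely because $M$ is a linear matrix, followed by rank subadditivity. The only point that requires a moment's care is the passage from the open locus of genuinely spanned points to the Zariski closure $S^q(X)$, and this is immediate once one observes that bounded rank is a closed condition.
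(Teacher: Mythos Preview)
Your argument is correct and is the standard one. The paper itself omits the proof entirely, presenting this proposition as a well-known fact (``A well-known connection exists between linear matrices and equations of higher secant varieties''), so there is nothing to compare against; what you wrote is precisely the folklore proof via linearity of $v\mapsto M(v)$, subadditivity of rank, and Zariski-closure of the bounded-rank locus.
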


This fact is used to construct significant equations of higher secant varieties to certain projective varieties. Especially, in tensor geometry, {\it flattenings} are used in such a way for Segre varieties, Veronese varieties, etc. For this purpose, one shall use linear matrices that satisfy a genericity property.

\begin{Def}[D. Eisenbud {\cite[Proposition-Definition 1.1]{eisenbud1988linear}}]
A linear matrix $M$ of size $a\times b$ is {\it $1$-generic} if $AMB^T$ has no zero entries for any $(A,B)\in\GL(a,b)$.
\end{Def}

We collect important structural results on linear matrices as follows:

\begin{Thm}\label{structure}
Let $M$ be an $a\times b$ linear matrix with $a\leq b$.
\begin{enumerate}
\item \textup{(Eagon-Northcott \cite{eagon1962ideals})} We have $\codim I_a(M)\leq b-a+1$, and if the equality holds, then $S/I_a(M)$ is minimally resolved by the Eagon-Nortcott complex.

\item \textup{(D.\ Eisenbud \cite{eisenbud1988linear})} If $M$ is $1$-generic, then $S/I_a(M)$ is a Cohen-Macaulay domain of codimension $b-a+1$. 

\item \textup{(T.\ J{\'o}zefiak \cite{jozefiak1978ideals})} When $M$ is symmetric, we have $\codim I_{a-1}(M)\leq 3$, and if the equality holds, then $S/I_{a-1}(M)$ is minimally resolved by the J{\'o}zefiak complex \cite[p.\ 600]{jozefiak1978ideals}.

\item If $M$ is symmetric and $1$-generic, then $S/I_{a-1}(M)$ is a Cohen-Macaulay ring of codimension $3$.
\end{enumerate}
\end{Thm}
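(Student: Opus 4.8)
The plan is to take items (1)--(3) as the cited theorems of Eagon--Northcott, Eisenbud, and J\'ozefiak, recalling the mechanism behind each, and then to derive (4) by running Eisenbud's argument in the symmetric setting. For (1), I would record the classical codimension of generic determinantal loci: in the affine space of all $a\times b$ matrices, the locus where every $a$-minor vanishes is the rank-$\le a-1$ variety $\Sigma_{a-1}$, of codimension $(a-(a-1))(b-(a-1))=b-a+1$. Specializing the generic entries to the linear forms of $M$ can only drop codimension, giving $\codim I_a(M)\le b-a+1$; when equality holds, $I_a(M)$ has the maximal admissible codimension, which is exactly the hypothesis making the Eagon--Northcott complex acyclic (via the Buchsbaum--Eisenbud exactness criterion), and since its differentials have entries in the irrelevant ideal this resolution is minimal. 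Item (3) is the symmetric counterpart: among symmetric $a\times a$ matrices, the rank-$\le a-2$ locus cut out by the $(a-1)$-minors has codimension $\binom{a-(a-2)+1}{2}=\binom{3}{2}=3$, and at this maximal codimension the J\'ozefiak complex plays the role of Eagon--Northcott and resolves $S/I_{a-1}(M)$ minimally.

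For (2) and (4) the new ingredient is $1$-genericity, which I would exploit through Eisenbud's linear-section viewpoint. The entries of $M$ realize $\mathbb P^r$ as a linear subspace $L$ inside the projectivized space of (symmetric) matrices, and $V(I_a(M))$, respectively $V(I_{a-1}(M))$, is the linear section $L\cap\Sigma_{a-1}$, respectively $L\cap\Sigma$ in the symmetric case. Since $\Sigma_{a-1}$ and $\Sigma$ are Cohen--Macaulay of codimensions $b-a+1$ and $3$, a \emph{proper} linear section of either is again Cohen--Macaulay---a regular sequence of linear forms cuts a Cohen--Macaulay ring to a Cohen--Macaulay ring. Thus for (2) it remains to see that $1$-genericity forces $L\cap\Sigma_{a-1}$ to have the expected codimension and to stay irreducible, which is Eisenbud's theorem, yielding a Cohen--Macaulay domain; and for (4) it suffices to obtain properness alone, since the statement asks only for a Cohen--Macaulay ring and not a domain. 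Concretely, the two-step plan for (4) is: (i) deduce from symmetric $1$-genericity that $\codim I_{a-1}(M)=3$; then (ii) invoke (3) together with the linear-section principle to conclude Cohen--Macaulayness.

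The step I expect to be the main obstacle is (i), the properness of the symmetric linear section. In the non-symmetric case the non-vanishing of every entry of $AMB^T$ for all $(A,B)\in\GL(a,b)$ translates directly into the needed codimension bound, but for symmetric matrices the two-sided action collapses to $A=B$, so fewer symmetries are available and the symmetric degeneration loci are subtler. I would therefore reduce, by restricting $L$ to a general linear subspace, to the smallest nontrivial case of a $3\times 3$ symmetric $1$-generic matrix---whose rank-$\le 1$ locus is the $2$-Veronese surface $\nu_2(\mathbb P^2)\subset\mathbb P^5$ of codimension $3$---verify the codimension bound there by hand, and then propagate it back up to the general symmetric $1$-generic $M$.
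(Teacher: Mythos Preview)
Your overall architecture for (4) is right and matches the paper: once you know $\codim I_{a-1}(M)=3$, item (3) gives the J\'ozefiak resolution and hence Cohen--Macaulayness. The issue is entirely in step (i), the properness/codimension step, and here your plan does not go through as stated.

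The proposed ``reduction to the $3\times 3$ case'' is not a valid move. Restricting the linear span $L$ inside the space of symmetric matrices, or restricting the linear forms to a general subspace of $\mathbb P^r$, does not shrink the matrix size; and passing to a $3\times 3$ principal submatrix replaces $I_{a-1}(M)$ by an ideal of $2$-minors of a smaller matrix, which bears no containment relation to $I_{a-1}(M)$. So there is no induction on $a$ available along these lines. Moreover, even your base case is not settled: a $3\times 3$ symmetric $1$-generic matrix need not be the \emph{generic} symmetric matrix, so its rank-$\le 1$ locus is only a linear section of $\nu_2(\mathbb P^2)$, and you are back to the very properness question you are trying to prove.

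The paper's argument for (4) avoids this by \emph{leaving the symmetric category} and invoking (2) instead. After normalizing so that $M(z)=\left(\begin{smallmatrix}I&0\\0&0\end{smallmatrix}\right)$ at a general smooth point $z$ of the determinantal hypersurface, one deletes the first row to get $M'$ and the last row to get $M''$; both are $1$-generic of size $(a-1)\times a$, so by (2) the ideals $I_{a-1}(M')$ and $I_{a-1}(M'')$ are \emph{prime} of codimension $2$. Since $\rank M'(z)\ne\rank M''(z)$ they are distinct primes, hence
\[
\codim I_{a-1}(M)\ \ge\ \codim\bigl(I_{a-1}(M')+I_{a-1}(M'')\bigr)\ >\ 2,
\]
forcing $\codim I_{a-1}(M)=3$ by (3). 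The key idea you are missing is exactly this: use the \emph{non}-symmetric Eisenbud theorem on two different $(a-1)\times a$ row-deleted submatrices to manufacture two distinct codimension-$2$ primes inside $I_{a-1}(M)$.
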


\begin{proof} For lack of references, we prove (4) here. Note that the determinant of $M$ is an irreducible polynomial by (2). Let $Y$ be the induced hypersurface, and take a general smooth point $z\in Y$ so that $M(z)$ has rank $a-1$. Replacing $M$ with a suitable product $AMA^T$, $A\in\GL(a)$, if necessary, we may assume that
$$
M(z)=
\begin{blockarray}{ccc}
&a-1&1\\
\begin{block}{c(cc)}
a-1&I&0\\
1&0&0\\
\end{block}
\end{blockarray}
\ .
$$
Set $M'$ (resp.\ $M''$) to be the linear matrix obtained by deleting the first (resp.\ last) row of $M$. Then, $M'$ and $M''$ are $1$-generic of size $(a-1)\times a$, hence $I_{a-1}(M')$ and $I_{a-1}(M'')$ are prime ideals of codimension $2$ by (2). Note that $I_{a-1}(M')\neq I_{a-1}(M'')$ because $\rank M'(z)\neq\rank M''(z)$. So, one has
$$
\codim I_{a-1}(M)\geq\codim(I_{a-1}(M')+I_{a-1}(M''))>\codim I_{a-1}(M')=2.
$$ 
By (3), we are done.
\end{proof}

We now define certain linear matrices that play an important role.

\begin{Def}
Let $M$ be a $1$-generic linear matrix whose $2$-minors vanish on $X$, and $p$ a positive integer. Then, $M$ is
\begin{enumerate}
\item a {\it $(p,q)$-scroll matrix} of $X$ if it has size $(q+1)\times(p+q)$; and

\item a {\it $q$-Veronese matrix} of $X$ if it is symmetric with $q+2$ rows.
\end{enumerate}
\end{Def}

\bigskip

\subsection{Gluing scroll and Veronese matrices}

In this subsection, we present ways to construct scroll and Veronese matrices from smaller ones.

\begin{Def}
Let $M$ be a linear matrix of rank at most one on $X$, and take $\Gamma$ to be a possibly empty finite subset of $X$. Then, $M$ is {\it $\Gamma$-regular} if $\bigcap_{z\in\Gamma}\ker M(z)$ has codimension $|\Gamma|$, where $\ker$ means the right null space.
\end{Def}

Let $p$ be a positive integer, and $\Gamma\subset X$ a finite subset. Notice that if $M$ is a $\Gamma$-regular $(p,q)$-scroll matrix, then so is $AMB^T$ for every $(A,B)\in\GL(q+1,p+q)$. The same thing holds for any $q$-Veronese matrix $M$ and the $\GL(q+2)$-action $A\mapsto AMA^T$.

\begin{Def}
Let $\Gamma$ be a finite subset of $X$. Then, we define
\begin{align*}
\Sigma_{p,q}(X,\Gamma) & =\frac{\{\text{$\Gamma$-regular $(p,q)$-scroll matrices of $X$}\}}{\GL(q+1,p+q)} \text{ and} \\
V_q(X,\Gamma) & =\frac{\{\text{$\Gamma$-regular $q$-Veronese matrices of $X$}\}}{\GL(q+2)}.
\end{align*}
Additionally, a subset 
$$
\Sigma'_{p,q}(X,\Gamma)\subset\Sigma_{p,q}(X,\Gamma)\
$$ 
is obtained by collecting $\Gamma$-regular linear matrices whose transposes are also $\Gamma$-regular.
\end{Def}

\begin{Rmk}\label{operate}
Let $M$ be a linear matrix of size $a\times b$ that satisfies $M(z)=E^{0,0}$ for a point $z\in\mathbb P^r$ (see \Cref{notation}). Consider the following operations:
\begin{enumerate}
\item $\pi_zM$ is obtained from $M$ by deleting the first column;

\item $\pi_z^TM$ is the transpose of $\pi_zM$; and

\item $\partial_zM$ is obtained from $M$ by deleting both the first row and column. 
\end{enumerate}
One may see that for any finite points $z\in\Gamma\subset X$, the operations above give the maps below:
$$
\begin{tikzcd}
&\Sigma'_{p,q-1}(X_{\mathbb T_zX},\Gamma_{\mathbb T_zX})\\
\Sigma'_{p-1,q}(X_z,\Gamma_z) \ar[d, hook] &\Sigma'_{p,q}(X,\Gamma) \ar[l,"\pi_z"] \ar[u,"\partial_z",swap] \ar[d, hook]  \\
\Sigma_{p-1,q}(X_z,\Gamma_z)&\Sigma_{p,q}(X,\Gamma) \ar[l,"\pi_z"]
\end{tikzcd}
\begin{tikzcd}
&V_{q-1}(X_{\mathbb T_zX},\Gamma_{\mathbb T_zX})\\
\Sigma'_{2,q}(X_z,\Gamma_z)&V_q(X,\Gamma) \ar[l,"\pi_z^T"] \ar[u,"\partial_z",swap]
\end{tikzcd}
$$
Here we mean by $\Gamma_\Lambda$ the projection $\pi_\Lambda(\Gamma\setminus\Lambda)$, and two sets 
$$
\Sigma'_{p,0}(X_{\mathbb T_zX},\Gamma_{\mathbb T_zX})\quad\text{and}\quad V_0(X_{\mathbb T_zX},\Gamma_{\mathbb T_zX})
$$ 
have the natural definitions:
\begin{itemize}
\item If $[M]\in\Sigma'_{p,0}(X_{\mathbb T_zX},\Gamma_{\mathbb T_zX})$, then $M$ is a row vector of $p$ linearly independent linear forms vanishing on $\mathbb T_zX$ such that both $M$ and $M^T$ are $\Gamma_{\mathbb T_zX}$-regular.

\item If $[M]\in V_0(X_{\mathbb T_zX},\Gamma_{\mathbb T_zX})$, then $M$ is a $1$-generic and symmetric $2\times 2$ matrix of linear forms vanishing on $\mathbb T_zX$ such that it has determinant in $I_{X_{\mathbb T_zX}}$ and is $\Gamma_{\mathbb T_zX}$-regular.
\end{itemize}
Similarly, so does a set $V_{-1}(X_\Lambda,\Gamma_\Lambda)$ when $\Lambda=\langle\mathbb T_zX,\mathbb T_wX\rangle$ for two (general) points $z,w\in X$:
\begin{itemize}
\item If $[M]\in V_{-1}(X_\Lambda,\Gamma_\Lambda)$, then $M$ is a $\Gamma_\Lambda$-regular $1\times 1$ matrix whose entry is a nonzero linear form vanishing on $\Lambda$.
\end{itemize}
\end{Rmk}

The following lemma is useful to verify whether two given linear matrices are equivalent:

\begin{Lem}\label{LinCombi}
Let $M$ be an $a\times b$ linear matrix whose $2$-minors vanish on $X$ with $a,b\geq 2$. Suppose that the submatrix $M'$ of
$$
M=
\begin{blockarray}{ccc}
&1&b-1\\
\begin{block}{c(cc)}
a & A & M' \\
\end{block}
\end{blockarray}
$$
is $1$-generic. If $M$ is not $1$-generic, then $A$ is a linear combination of columns of $M'$ with coefficients in the base field.
\end{Lem}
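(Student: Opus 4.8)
The plan is to recast $1$-genericity in terms of generalized entries and then to bring in the rank condition geometrically. Recall that an entry of $PMQ^T$ for $(P,Q)\in\GL(a,b)$ is precisely a generalized entry $\lambda^T M\mu$, where $\lambda\in k^a$ and $\mu\in k^b$ (over the base field $k$) are the relevant rows of $P$ and $Q$; hence $M$ fails to be $1$-generic exactly when $\lambda^T M\mu=0$ in $S_1$ for some nonzero $\lambda$ and $\mu$. First I would fix such a pair $(\lambda,\mu)$ for our non-$1$-generic $M$ and split $\mu=(c,\nu')$ according to the block decomposition $M=(A\mid M')$. If $c=0$, then $\lambda^T M'\nu'=0$ with $\lambda,\nu'$ nonzero, contradicting the $1$-genericity of $M'$; so $c\neq 0$, and after rescaling I may take $c=1$.

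Next I would perform the column operation that adds $\nu'_j$ times the $j$-th column of $M'$ to the column $A$, i.e.\ replace $M$ by $MQ^T$ for the corresponding $Q\in\GL(b)$. This leaves $M'$ untouched, preserves $1$-genericity (which is $\GL(a,b)$-invariant by definition), and preserves the vanishing of the $2$-minors on $X$ (they merely get recombined). Writing $\tilde A=A+M'\nu'$ for the new first column, the defining relation becomes $\lambda^T\tilde A=0$; and since $A=\tilde A-M'\nu'$, the whole lemma reduces to the single claim that $\tilde A=0$.

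The crux, and the only place where geometry enters, is this vanishing. Here I would use that the $2$-minors of $\tilde M=(\tilde A\mid M')$ still vanish on $X$, so $\tilde M(x)$ has rank at most $1$ for every $x\in X$. Suppose $\tilde A\neq 0$. Since $X$ is nondegenerate, $\tilde A$ cannot vanish identically on $X$, so $\tilde A(x)\neq 0$ on a dense open $U\subseteq X$; on $U$ the matrix $\tilde M(x)$ has rank exactly $1$ with column space spanned by $\tilde A(x)$, whence every column of $M'(x)$ is a scalar multiple of $\tilde A(x)$. As $\lambda^T\tilde A(x)=0$, this forces $\lambda^T M'(x)=0$ for all $x\in U$, so the row of linear forms $\lambda^T M'$ vanishes on $X$; nondegeneracy then gives $\lambda^T M'=0$ as polynomials, contradicting the $1$-genericity of $M'$. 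Therefore $\tilde A=0$, and $A=-M'\nu'$ is the desired $k$-linear combination of the columns of $M'$.

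I expect the main subtlety to be recognizing that the statement is genuinely \emph{not} a fact of pure linear algebra: dropping the hypothesis that the $2$-minors vanish on $X$ makes it false (for instance $M'=\left(\begin{smallmatrix}x&y\\z&w\end{smallmatrix}\right)$ together with $A=\left(\begin{smallmatrix}x\\0\end{smallmatrix}\right)$ yields a non-$1$-generic $M$ with $A$ outside the column span of $M'$). Thus the essential work is to combine the rank-one condition on $X$ with the nondegeneracy of $X$ so as to upgrade the single scalar relation $\lambda^T\tilde A=0$ to the full vector identity $\tilde A=0$; the column reduction and the generalized-entry reformulation are the routine scaffolding around this one geometric step.
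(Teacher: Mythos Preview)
Your proof is correct and follows essentially the same approach as the paper's: both isolate vectors $\lambda,\mu$ witnessing the failure of $1$-genericity, observe that the first entry of $\mu$ must be nonzero by $1$-genericity of $M'$, and then use the rank-$\leq 1$ condition on $X$ together with nondegeneracy to conclude $M\mu=0$. The only cosmetic difference is in the last step: the paper picks an arbitrary $v_1$ and a $w_1$ with vanishing first entry, expands the $2$-minor $\det\begin{pmatrix}\lambda^T M\mu & \lambda^T M w_1\\ v_1^T M\mu & v_1^T M w_1\end{pmatrix}\in I_X$, and uses primality of $I_X$ plus nondegeneracy to force $v_1^T M\mu=0$ directly, whereas you argue by contradiction pointwise to obtain $\lambda^T M'=0$; these are two phrasings of the same idea.
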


\begin{proof}
Let $\Bbbk$ be the base field, and write $\Bbbk^c$ for the $\Bbbk$-vector space of column vectors with $c$ entries. Since $M$ is not $1$-generic, but since so is $M'$, there are two nonzero elements $v_0\in\Bbbk^a$ and $w_0\in\Bbbk^b$ such that $v_0^TMw_0=0$ holds, and the first entry of $w_0$ is nonzero. We will show that 
$$
Mw_0=0.
$$ 
Set $v_1\in\Bbbk^a$ to be arbitrary, and choose a nonzero vector $w_1\in\Bbbk^b$ whose first entry is zero. Consider a determinant of the form
$$
\begin{vmatrix}
v_0^TMw_0 & v_0^TMw_1 \\
v_1^TMw_0 & v_1^TMw_1
\end{vmatrix}
.
$$
By assumption, it lies in $I_X$ with $v_0^TMw_0=0$ and $v_0^TMw_1\neq0$, hence $v_1^TMw_0=0$. Since $v_1$ was arbitrary in $\Bbbk^a$, we are done.
\end{proof}

Now, we formalize our methods of gluing scroll and Veronese matrices as follows:

\begin{Prop}[Gluing for scroll matrices I]\label{scrollglue}
Let $p\geq 3$ be an integer, and $\Gamma$ a general finite subset of $X$ with two different points $z,w\in\Gamma$. Then, $\Sigma_{p,q}(X,\Gamma)$ is naturally identified with the subset 
$$
\Sigma=\{([M_z],[M_w])\}\subset\Sigma_{p-1,q}(X_z,\Gamma_z)\times\Sigma_{p-1,q}(X_w,\Gamma_w)
$$ 
given by 
\begin{enumerate}
\item[$(\pi\pi)$] $\pi_{\overline{w}}[M_z]=\pi_{\overline{z}}[M_w]$.
\end{enumerate}
\end{Prop}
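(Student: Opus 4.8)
The plan is to realize the natural map
$\Phi\colon\Sigma_{p,q}(X,\Gamma)\to\Sigma_{p-1,q}(X_z,\Gamma_z)\times\Sigma_{p-1,q}(X_w,\Gamma_w)$, $[M]\mapsto(\pi_z[M],\pi_w[M])$, as a bijection onto $\Sigma$ and to produce an explicit inverse by gluing. First I would fix convenient representatives. Since $z,w\in X$ are general and $M$ is $1$-generic, both $M(z)$ and $M(w)$ are rank-one matrices whose row and column spans are in general position; hence I can act by $\GL(q+1,p+q)$ to normalize $M(z)=E^{0,0}$ and $M(w)=E^{1,1}$ simultaneously. Writing the columns of $M$ as $c_0,\ldots,c_{p+q-1}$, this forces $c_0$ to vanish at $w$ but not at $z$, $c_1$ to vanish at $z$ but not at $w$, and every remaining $c_j$ with $j\ge 2$ to vanish at both points. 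In these terms $\pi_z M$ drops $c_0$ and $\pi_w M$ drops $c_1$, so the two projections share exactly the columns $c_2,\ldots,c_{p+q-1}$.

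Next I would check that $\Phi$ lands in $\Sigma$. By \Cref{operate} each coordinate $\pi_z[M]$, $\pi_w[M]$ is a well-defined $\Gamma_z$- resp.\ $\Gamma_w$-regular scroll matrix, so only the compatibility $(\pi\pi)$ remains. Both $\pi_{\overline w}\pi_z[M]$ and $\pi_{\overline z}\pi_w[M]$ are obtained by deleting the two special columns $c_0,c_1$, and reduce to the class of $(c_2,\ldots,c_{p+q-1})$ on $X_{\langle z,w\rangle}=(X_z)_{\overline w}=(X_w)_{\overline z}$; here the hypothesis $p\ge 3$ is exactly what guarantees $p-2\ge 1$, so this common object is a genuine element of $\Sigma_{p-2,q}(X_{\langle z,w\rangle},\ldots)$ and $(\pi\pi)$ expresses the order-independence of projecting from the line $\langle z,w\rangle$.

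For surjectivity I would reverse the process. Given a compatible pair, I choose representatives $M_z$, $M_w$ normalized at $\overline w$, $\overline z$; the relation $(\pi\pi)$ provides $(P,Q)\in\GL(q+1,p+q-2)$ matching their common blocks, and after transporting $M_z$ by $(P,\cdot)$ I may assume $M_z$ and $M_w$ literally share the $p+q-2$ columns $t_1,\ldots,t_{p+q-2}$. I then glue by adjoining the two special columns, $M=(c_0,d_0,t_1,\ldots,t_{p+q-2})$, where $c_0$ is the distinguished column of $M_z$ and $d_0$ that of $M_w$. That $M$ has rank at most one on $X$ follows by propagating the rank condition through a shared column: at a general $x\in X$ with $t_1(x)\ne 0$ both $c_0(x)$ and $d_0(x)$ are proportional to $t_1(x)$, whence the only remaining mixed $2$-minors $(c_0\,|\,d_0)$ lie in $I_X$; such a shared column exists precisely because $p\ge 3$. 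That $M$ is $1$-generic follows from \Cref{LinCombi}: its submatrix $M_w=(d_0,t_1,\ldots,t_{p+q-2})$ is $1$-generic, so a failure would force the adjoined first column $c_0$ to be a $\Bbbk$-combination of columns of $M_w$, which is impossible since those columns all vanish at $w$ while $c_0$ does not. A short bookkeeping of the right null spaces transports $\Gamma_z$- and $\Gamma_w$-regularity to $\Gamma$-regularity of $M$. Finally, reading off $\pi_z[M]$ and $\pi_w[M]$ returns the original pair, while running the two constructions in the opposite order returns $[M]$, so $\Phi$ and the gluing are mutually inverse.

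The main obstacle I anticipate is not any single verification above but the well-definedness of the gluing, namely showing that the reconstructed class $[M]$ is independent of the chosen representatives, of the normalizations, and—most delicately—of the matching $(P,Q)$, since a nontrivial stabilizer of the common block could a priori yield inequivalent glued matrices. This is exactly where the generality of $\Gamma$ and $\Gamma$-regularity enter: the extra base points rigidify the $\GL$-action enough to make the matching canonical, forcing $[M]$ to be unique and thereby furnishing the injectivity of $\Phi$. Establishing this rigidity is the heart of the argument; the $1$-genericity check via \Cref{LinCombi} is the other essential, though more routine, technical input.
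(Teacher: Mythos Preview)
Your approach coincides with the paper's: build the inverse of $\Phi$ by adjoining the two distinguished columns to the shared block $C$, verify rank $\le 1$ on $X$ through a generically nonzero column of $C$ (this is precisely where $p\ge 3$ is used), and obtain $1$-genericity from \Cref{LinCombi} together with the evaluation obstruction (your $c_0(w)\neq 0$ is the paper's $A(z)\neq 0$, up to relabeling). The well-definedness concern you flag at the end is not singled out in the paper, which simply declares the gluing to be the inverse map and leaves the routine verification of the two composites implicit; $\Gamma$-regularity is not the mechanism that resolves it.
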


\begin{proof}
Let $\Sigma$ be the set of $([M_z],[M_w])$ in question. Then, two operations $\pi_z$ and $\pi_w$ induce a map $\Sigma_{p,q}(X,\Gamma)\to\Sigma$ by 
$$
[M]\mapsto(\pi_z[M],\pi_w[M]).
$$ 
We will construct its inverse map as follows: Pick any element $([M_z],[M_w])\in\Sigma$. Then by $(\pi\pi)$, we may assume that 
\begin{align*}
M_z&=
\begin{blockarray}{ccc}
&1&p+q-2\\
\begin{block}{c(cc)}
q+1&B & C\\
\end{block}
\end{blockarray}
\quad\text{with}\quad B(w)\neq 0\text{; and}\\
M_w&=
\begin{blockarray}{ccc}
&1&p+q-2\\
\begin{block}{c(cc)}
q+1&A & C\\
\end{block}
\end{blockarray}
\quad\text{with}\quad A(z)\neq 0,
\end{align*}
where $[C]$ lies in $\Sigma_{p-2,q}(X_{\langle z,w\rangle},\Gamma_{\langle z,w\rangle})$. Put
$$
M=
\begin{pmatrix}
A&B&C
\end{pmatrix}
.
$$
Since $\rank M_z(y),\rank M_w(y)\leq 1$ and $C(y)\neq 0$ for a general point $y\in X$, we have $\rank M(y)\leq 1$ on $X$. To show that $M$ is $1$-generic, suppose not. Then by \Cref{LinCombi}, $A$ is a linear combination of $B$ and columns of $C$. However, it is a contradiction to $A(z)\neq 0$; $M$ is thus $1$-generic. And it is easy to see that $M$ is $\Gamma$-regular, and so the mapping $([M_z],[M_w])\mapsto[M]$ is the desired inverse map.
\end{proof}

\begin{Prop}[Gluing for scroll matrices II]\label{scrollglueII}
Set $q\geq 2$, and take $\Gamma$ to be a general finite subset of $X$ with two distinct points $z,w\in\Gamma$. Then, under the assumption that $\Sigma'_{2,q}(X_{\langle z,w\rangle},\Gamma_{\langle z,w\rangle})=\emptyset$, there is a map to $\Sigma_{3,q}(X,\Gamma)$ from the subset 
$$
\Sigma'=\{([M_z],[M_w],[M_{\mathbb T_zX}],[M_{\mathbb T_wX}])\}
$$ 
of $\Sigma'_{2,q}(X_z,\Gamma_z)\times\Sigma'_{2,q}(X_w,\Gamma_w)\times\Sigma'_{3,q-1}(X_{\mathbb T_zX},\Gamma_{\mathbb T_zX})\times\Sigma'_{3,q-1}(X_{\mathbb T_wX},\Gamma_{\mathbb T_wX})$ determined by 
\begin{enumerate}
\item[$(\partial\partial)$] $\partial_{\overline{w}}[M_{\mathbb T_zX}]=\partial_{\overline{z}}[M_{\mathbb T_wX}]$;

\item[$(\partial\pi)$] $\partial_{\overline{w}}[M_z]=\pi_{\overline{z}}[M_{\mathbb T_wX}]$ and  $\partial_{\overline{z}}[M_w]=\pi_{\overline{w}}[M_{\mathbb T_zX}]$.
\end{enumerate}
\end{Prop}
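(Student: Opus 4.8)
The plan is to imitate the proof of \Cref{scrollglue}. I would build the desired $(q+1)\times(q+3)$ matrix $M$ in a doubly normalized block form, with a distinguished $z$-row and $z$-column carrying the pivot of $M(z)$ and a distinguished $w$-row and $w$-column carrying the pivot of $M(w)$, arranged so that $\pi_zM=M_z$ and $\pi_wM=M_w$ delete the $z$- and $w$-columns, while $\partial_zM=M_{\mathbb T_zX}$ and $\partial_wM=M_{\mathbb T_wX}$ delete in addition the $z$- and $w$-rows. The key preliminary observation is that $\partial_zM$ is exactly $M_z$ with its $z$-row removed, so once $[M_z]$ and $[M_w]$ are glued into a single $M$ the two tangential pieces are forced; thus the real content is to glue the two inner matrices. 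For that I would invoke \Cref{scrollglue} (legitimate since $p=3\geq 3$), which reconstructs $M$ from $[M_z]$ and $[M_w]$ as soon as the inner--inner compatibility
\[
\pi_{\overline w}[M_z]=\pi_{\overline z}[M_w]
\]
holds. Hence the whole proof reduces to deriving this single identity from $(\partial\partial)$ and $(\partial\pi)$.

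Write $N_z=\pi_{\overline w}[M_z]$ and $N_w=\pi_{\overline z}[M_w]$, both $1$-generic $(q+1)\times(q+1)$ linear matrices whose $2$-minors vanish on $X_{\langle z,w\rangle}$. Chasing the definitions, the condition $(\partial\partial)$ links the two tangential matrices and each half of $(\partial\pi)$ links an inner matrix to a tangential one; after fixing representatives these say precisely that $N_z$ and $N_w$ share the same bottom block $M_{**}$ (the common $(q-1)\times(q+1)$ block), and that the $z$-row of $N_z$ and the $w$-row of $N_w$ are the ones prescribed by $(\partial\pi)$, while the remaining top row of each ($N_z$'s $w$-row and $N_w$'s $z$-row) is a priori unconstrained. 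The task is to upgrade this partial coincidence to the full equality $N_z=N_w$.

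This uniqueness step is the main obstacle, and it is exactly where the hypothesis $\Sigma'_{2,q}(X_{\langle z,w\rangle},\Gamma_{\langle z,w\rangle})=\emptyset$ is indispensable. Because $q\geq 2$, the shared block $M_{**}$ is a \emph{nonzero} linear matrix, hence of rank one at a general point of $X_{\langle z,w\rangle}$; since every row of $N_z$ and of $N_w$ is pointwise proportional to the rows of $M_{**}$, all the rows in sight are pointwise proportional to one another. Consequently, if $N_z\neq N_w$, then stacking $N_z$ with the discrepant $w$-row of $N_w$ yields a $(q+2)\times(q+1)$ linear matrix $P$ of rank at most one on $X_{\langle z,w\rangle}$, so the $2$-minors of $P$, equivalently of $P^T$, vanish on $X_{\langle z,w\rangle}$. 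Applying \Cref{LinCombi} to $P^T$ (whose complementary block $N_z^T$ is $1$-generic, $1$-genericity being transpose invariant), either $P^T$ is $1$-generic or the adjoined column lies in the $\Bbbk$-span of the columns of $N_z^T$. In the former case $P^T$ is a $1$-generic $(q+1)\times(q+2)$ matrix with $2$-minors in $I_{X_{\langle z,w\rangle}}$, and one checks it is regular together with its transpose, so its class lies in $\Sigma'_{2,q}(X_{\langle z,w\rangle},\Gamma_{\langle z,w\rangle})$, contradicting the emptiness hypothesis. In the latter case the $w$-row of $N_w$ lies in the row span of $N_z$, and a single row operation fixing the $M_{**}$-block makes the two $w$-rows equal without disturbing the earlier normalizations. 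The symmetric argument equates the two $z$-rows, giving $N_z=N_w$.

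With $\pi_{\overline w}[M_z]=\pi_{\overline z}[M_w]$ established, \Cref{scrollglue} furnishes a $\Gamma$-regular $(3,q)$-scroll matrix $M$ with $\pi_z[M]=[M_z]$ and $\pi_w[M]=[M_w]$; its $1$-genericity comes for free from that proposition, the point being that the $z$-column is nonzero at $z$ while every other column vanishes there, so \Cref{LinCombi} cannot produce a degeneration. It then remains only to confirm $\partial_z[M]=[M_{\mathbb T_zX}]$ and $\partial_w[M]=[M_{\mathbb T_wX}]$: since $\partial_zM$ is $M_z$ with its $z$-row deleted, and $M_{\mathbb T_zX}$ is pinned down by $\pi_{\overline w}[M_{\mathbb T_zX}]=\partial_{\overline z}[M_w]$ through the same rank-one uniqueness, the two coincide, and likewise at $w$. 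This shows the assignment $([M_z],[M_w],[M_{\mathbb T_zX}],[M_{\mathbb T_wX}])\mapsto[M]$ is a well-defined map $\Sigma'\to\Sigma_{3,q}(X,\Gamma)$, and the decisive ingredient is the uniqueness argument of the previous paragraph, where the emptiness of $\Sigma'_{2,q}(X_{\langle z,w\rangle},\Gamma_{\langle z,w\rangle})$ is used in an essential way.
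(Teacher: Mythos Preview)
Your approach is the paper's: reduce the construction to verifying $(\pi\pi)$ so that \Cref{scrollglue} applies, then use \Cref{LinCombi} together with the emptiness hypothesis $\Sigma'_{2,q}(X_{\langle z,w\rangle},\Gamma_{\langle z,w\rangle})=\emptyset$ to force the two top rows of $N_w$ into the row span of $N_z$. The paper does exactly this, writing $N_z=\begin{pmatrix}C\\F\\I\end{pmatrix}$ and $N_w=\begin{pmatrix}C_0\\F_0\\I\end{pmatrix}$ and applying \Cref{LinCombi} to each of $(C_0^T\mid C^T\mid F^T\mid I^T)$ and $(F_0^T\mid C^T\mid F^T\mid I^T)$.

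One step does not work as you phrased it. You assert that once $F_0\in\mathrm{rowspan}(C,F,I)$, ``a single row operation fixing the $M_{**}$-block makes the two $w$-rows equal,'' and only afterwards run the symmetric argument for the $z$-rows. But row operations on $N_w$ use the rows $C_0,F_0,I$, not $C,F,I$; knowing $F_0=\alpha C+\beta F+\gamma I$ does not let you manufacture $F$ inside $N_w$ until you also know $C_0\in\mathrm{rowspan}(C,F,I)$. The fix is simply to run \emph{both} stacking arguments first (as the paper does), obtaining $C_0,F_0\in\mathrm{rowspan}(N_z)$; then $1$-genericity of $N_w$ gives $\mathrm{rowspan}(N_w)=\mathrm{rowspan}(N_z)$, so $N_w=A\,N_z$ for some $A\in\GL(q+1)$ with the bottom $(q-1)$ rows fixed, and $(\pi\pi)$ follows. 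With this reordering your proof is complete and coincides with the paper's. (Your final paragraph checking $\partial_z[M]=[M_{\mathbb T_zX}]$ is not required by the statement, which only asks for a map.)
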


\begin{proof}
Choose an arbitrary element $([M_z],[M_w],[M_{\mathbb T_zX}],[M_{\mathbb T_wX}])\in\Sigma'$. By the gluing for scroll matrices I (\Cref{scrollglue}), it is enough to verify 
\begin{enumerate}
\item[$(\pi\pi)$] $\pi_{\overline{w}}[M_z]=\pi_{\overline{z}}[M_w]$.
\end{enumerate} 
By $(\partial\partial)$, we may write $M_{\mathbb T_zX}$ and $M_{\mathbb T_wX}$ as
\begin{align*}
M_{\mathbb T_zX}&=
\begin{blockarray}{ccc}
&1&q+1\\
\begin{block}{c(cc)}
1&\ast & F_0\\
q-1&\ast & I\\
\end{block}
\end{blockarray}
\quad\text{with}\quad M_{\mathbb T_zX}(w)=E^{0,0}\text{; and}\\
M_{\mathbb T_wX}&=
\begin{blockarray}{ccc}
&1&q+1\\
\begin{block}{c(cc)}
1&\ast & C\\
q-1&\ast & I\\
\end{block}
\end{blockarray}
\quad\text{with}\quad M_{\mathbb T_wX}(z)=E^{0,0},
\end{align*}
where $[I]\in\Sigma'_{3,q-2}(X_\Lambda,\Gamma_\Lambda)$ for $\Lambda=\langle\mathbb T_zX,\mathbb T_wX\rangle$. Also, due to $(\partial\pi)$, we are able to transform $M_z$ into 
$$
M_z=
\begin{blockarray}{ccc}
&1&q+1\\
\begin{block}{c(cc)}
1&b&C\\
1&e&F\\
q-1&H&I\\
\end{block}
\end{blockarray}
\quad\text{with}\quad M_z(w)=E^{1,0},
$$
where $e$ stands for a linear form. Similarly, one gets
$$
M_w=
\begin{blockarray}{ccc}
&1&q+1\\
\begin{block}{c(cc)}
1&\ast& C_0\\
1&\ast& F_0\\
q-1&G& I\\
\end{block}
\end{blockarray}
\quad\text{with}\quad M_w(z)=E^{0,0}.
$$
Apply both \Cref{LinCombi} and the assumption to
$$
\begin{pmatrix}
C_0^T&C^T&F^T&I^T
\end{pmatrix}
\quad\text{and}\quad
\begin{pmatrix}
F_0^T&C^T&F^T&I^T
\end{pmatrix}
$$
in order to yield 
$$
M_w=
\begin{blockarray}{ccc}
&1&q+1\\
\begin{block}{c(cc)}
1&\ast&C\\
1&\ast&F\\
q-1&G&I\\
\end{block}
\end{blockarray}
\quad\text{with}\quad M_w(z)\in\langle E^{0,0},E^{1,0}\rangle
$$
up to the natural action, hence $(\pi\pi)$ holds.
\end{proof}

On the other hand, in order to glue for Veronese matrices, we first establish a map associated to general finite points $\Gamma\subset X$ with two distinct points $z,w\in\Gamma$. Let $\Sigma'\subset V_{q-1}(X_{\mathbb T_zX},\Gamma_{\mathbb T_zX})\times\Sigma'_{2,q}(X_w,\Gamma_w)$ be the fiber product
$$
\begin{tikzcd}
\Sigma' \ar[r] \ar[d] & \Sigma'_{2,q}(X_w,\Gamma_w) \ar[d,"\partial_{\overline z}"] \\
V_{q-1}(X_{\mathbb T_zX},\Gamma_{\mathbb T_zX}) \ar[r,"\pi_{\overline w}^T", swap] & \Sigma'_{2,q-1}(X_{\langle\mathbb T_zX,w\rangle},\Gamma_{\langle\mathbb T_zX,w\rangle}).
\end{tikzcd}
$$
Choose a pair $([M_{\mathbb T_zX}],[M_w])$ contained in $\Sigma$. By its nature, we may assume that 
\begin{align*}
M_{\mathbb T_zX}&=
\begin{blockarray}{ccc}
&1&q\\
\begin{block}{c(cc)}
1&d&E\\
q&E^T&F\\
\end{block}
\end{blockarray}
\quad\text{with}\quad M_{\mathbb T_zX}(w)=E^{0,0}\text{; and}\\
M_w&=
\begin{blockarray}{cccc}
&1&1&q\\
\begin{block}{c(ccc)}
1&a&b&C\\
q&C_0^T&E^T&F\\
\end{block}
\end{blockarray}
\quad\text{with}\quad M_w(z)=E^{0,0}.
\end{align*}
Then, one easily shows that the linear matrix
$$
M_z=
\begin{pmatrix}
b&d&E\\
C^T&E^T&F
\end{pmatrix}
$$
is a $\Gamma_z$-regular $(2,q)$-scroll matrix of $X_z$ with $M_z^T$ being $\Gamma_z$-regular as well, hence $[M_z]\in\Sigma'_{2,q}(X_z,\Gamma_z)$.

\begin{Def}
Let $\Gamma,z,w,\Sigma'$ be as above. Then, the map described above is written as 
$$
D^z_w:\Sigma'\to\Sigma'_{2,q}(X_z,\Gamma_z).
$$
\end{Def}

Veronese matrices have their own gluing process as follows:

\begin{Prop}[Gluing for Veronese matrices]\label{Veroneseglue}
Let $\Gamma\subset X$ be general finite points of $X$ with $z,w\in\Gamma$ two different points. Then, the set $V_q(X,\Gamma)$ is naturally identified with the subset 
$$
V=\{([M_z],[M_w],[M_{\mathbb T_zX}],[M_{\mathbb T_wX}])\}
$$ 
of $\Sigma'_{2,q}(X_z,\Gamma_z)\times\Sigma'_{2,q}(X_w,\Gamma_w)\times V_{q-1}(X_{\mathbb T_zX},\Gamma_{\mathbb T_zX})\times V_{q-1}(X_{\mathbb T_wX},\Gamma_{\mathbb T_wX})$
given by
\begin{enumerate}
\item[$(\partial\partial)$] $\partial_{\overline{w}}[M_{\mathbb T_zX}]=\partial_{\overline{z}}[M_{\mathbb T_wX}]$;

\item[$(\partial\pi^T)$] $\partial_{\overline{w}}[M_z]=\pi_{\overline{z}}^T[M_{\mathbb T_wX}]$, $\partial_{\overline{z}}[M_w]=\pi_{\overline{w}}^T[M_{\mathbb T_zX}]$;

\item[$(D)$] $D^z_w([M_{\mathbb T_zX}],[M_w])=[M_z]$ and $D^w_z([M_{\mathbb T_wX}],[M_z])=[M_w]$.
\end{enumerate}
\end{Prop}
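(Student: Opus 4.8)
The plan is to exhibit mutually inverse maps between $V_q(X,\Gamma)$ and the subset $V$, following the template of the scroll gluings in \Cref{scrollglue,scrollglueII} but carrying the symmetric structure through every step.

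First I would build the forward map. Given a $\Gamma$-regular $q$-Veronese matrix $M$, the congruence action $A\mapsto AMA^T$ with $A\in\GL(q+2)$ brings the two rank-one symmetric values $M(z)$ and $M(w)$ simultaneously to $E^{0,0}$ and $E^{1,1}$; this is possible since for general $z,w$ the associated vectors are linearly independent. In the resulting block form
$$
M=
\begin{pmatrix}
a & b & C\\
b & d & E\\
C^T & E^T & F
\end{pmatrix},
$$
with a $z$-row/column, a $w$-row/column and a common symmetric $q\times q$ block $F$, the operations of \Cref{operate} read off $M_{\mathbb T_zX}=\partial_zM$, $M_{\mathbb T_wX}=\partial_wM$, $M_z=\pi_z^TM$ and $M_w=\pi_w^TM$, which land in the stated sets by that remark. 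The three conditions are then forced by inspection: $(\partial\partial)$ records that $\partial_{\overline w}M_{\mathbb T_zX}$ and $\partial_{\overline z}M_{\mathbb T_wX}$ are both the shared block $F$; $(\partial\pi^T)$ records the overlap of the $(E,F)$- and $(C,F)$-strips; and since the symmetry of $M$ forces the lower-left block of $\pi_w^TM$ to coincide with the top block $C$, the recipe defining $D^z_w$ reproduces exactly $\pi_z^TM=M_z$, giving $(D)$, and symmetrically for $D^w_z$.

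Conversely, starting from a tuple in $V$, I would reconstruct $M$. Condition $(\partial\pi^T)$ is precisely the fiber-product condition placing $([M_{\mathbb T_zX}],[M_w])$ and $([M_{\mathbb T_wX}],[M_z])$ in the domains of $D^z_w$ and $D^w_z$, while $(\partial\partial)$ lets me normalize the four representatives so that they share a single common block $F$. I then define $M$ by the displayed symmetric form, taking its $z$-deleted part $\partial_zM$ to be $M_{\mathbb T_zX}$ and reading $a,b,C$ off $M_w$; condition $(D)$ guarantees $\pi_z^TM=M_z$ (and its mirror $\pi_w^TM=M_w$), so the four operations return the original tuple and the two maps are inverse to one another. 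It then remains to check that this $M$ is a genuine $\Gamma$-regular $q$-Veronese matrix. Symmetry holds by construction. For $\rank M\le1$ on $X$ I argue as in \Cref{scrollglue}: deleting column $0$ (resp.\ column $1$) yields a matrix whose transpose is $M_z$ (resp.\ $M_w$), hence of rank $\le1$ on $X$, and since the shared block $F$ is $1$-generic it is nonzero at a general $y\in X$, so every column of $M(y)$ is proportional to a fixed one. For $1$-genericity I would apply \Cref{LinCombi} with $M'=\pi_zM$: as $(M')^T=M_z\in\Sigma'_{2,q}(X_z,\Gamma_z)$, the matrix $M'$ is $1$-generic, so if $M$ were not $1$-generic the first column $A$ would be a base-field combination of the columns of $M'$; but every column of $M'$ vanishes at $z$ while $A(z)=a(z)\cdot(\text{first basis vector})\neq0$, a contradiction. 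Finally $\Gamma$-regularity is inherited from the components.

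The genuine obstacle is the reconstruction step, not the verifications. Each $[M_\bullet]$ is an equivalence class under a $\GL$-action, and the three conditions are equalities of classes only after applying $\partial$ or $\pi^T$. The work is to rigidify all four classes at once, that is, to pick representatives whose overlapping blocks literally agree, so that the symmetric assembly is consistent; in particular one must reconcile the two a priori distinct off-diagonal strips of $M_w$, namely the top block $C$ versus the lower-left block forced by symmetry. This is exactly where conditions $(D)$ and \Cref{LinCombi} must be used in concert, mirroring the final reconciliation in the proof of \Cref{scrollglueII}, and it is the part that does not reduce formally to the scroll case because symmetry links the two halves of the gluing.
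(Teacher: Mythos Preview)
Your outline matches the paper's proof almost exactly: define the forward map by $[M]\mapsto(\pi_z^T[M],\pi_w^T[M],\partial_z[M],\partial_w[M])$, then reconstruct $M$ from a compatible tuple by normalizing the representatives so that their blocks overlap, and finally assemble the symmetric matrix. Your verifications of $\rank M\le1$ on $X$, of $1$-genericity via \Cref{LinCombi}, and of $\Gamma$-regularity are also in the spirit of the paper (in fact more explicit than what the paper writes, which simply calls these ``evident'').

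There is, however, one genuine gap at precisely the point you flag as the obstacle. You say the reconciliation of the two off-diagonal blocks $C_0$ and $C$ ``mirrors the final reconciliation in the proof of \Cref{scrollglueII}.'' It does not. In \Cref{scrollglueII} the hypothesis $\Sigma'_{2,q}(X_{\langle z,w\rangle},\Gamma_{\langle z,w\rangle})=\emptyset$ is what forces the enlarged matrix to fail $1$-genericity, so that \Cref{LinCombi} applies. No such emptiness assumption is available here, and you have not supplied a substitute. The paper's argument is different: from $(\partial\pi^T)$ one gets $[(C_0^T\ F)]=[(C^T\ F)]$ as classes, hence the two submatrices have the same span of maximal minors; but if the $q\times(q+2)$ matrix $(C_0^T\ C^T\ F)$ were $1$-generic, then by \Cref{structure}(1) and (2) its $q$-minors would be linearly independent, contradicting that equality. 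This is the step that licenses \Cref{LinCombi} and lets you replace $C_0$ by $C$; without it, condition $(D)$ alone does not pin down a representative of $[M_z]$ whose blocks literally agree with those of $M_{\mathbb T_wX}$, and the symmetric assembly is not yet justified.
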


\begin{proof}
Sending
$$
[M]\mapsto(\pi_z^T[M],\pi_w^T[M],\partial_z[M],\partial_w[M]),
$$
we have a map $V_q(X,\Gamma)\to V$. Now, choose any element $([M_z],[M_w],[M_{\mathbb T_zX}],[M_{\mathbb T_wX}])\in V$. By $(\partial\partial)$, we may assume that
\begin{align*}
M_{\mathbb T_zX}&=
\begin{blockarray}{ccc}
&1&q\\
\begin{block}{c(cc)}
1&d&E\\
q&E^T&F\\
\end{block}
\end{blockarray}
\quad\text{with}\quad M_{\mathbb T_zX}(w)=E^{0,0},\text{ and}\\
M_{\mathbb T_wX}&=
\begin{blockarray}{ccc}
&1&q\\
\begin{block}{c(cc)}
1&a&C\\
q&C^T&F\\
\end{block}
\end{blockarray}
\quad\text{with}\quad M_{\mathbb T_wX}(z)=E^{0,0}
\end{align*}
for some element $[F]\in V_{q-2}(X_\Lambda,\Gamma_\Lambda)$ with $\Lambda=\langle\mathbb T_zX,\mathbb T_wX\rangle$. Use $(\partial\pi^T)$ so that we have
$$
M_w=
\begin{blockarray}{cccc}
&1&1&q\\
\begin{block}{c(ccc)}
1&\ast&b_0&C_0\\
q&\ast&E^T&F\\
\end{block}
\end{blockarray}
\quad\text{with}\quad M_w(z)=E^{0,0}
$$
up to the canonical action. Then, $(D)$ applies, hence we get a representative
$$
M_z=
\begin{pmatrix}
b_0 &d&E\\
C_0^T&E^T&F
\end{pmatrix}
\quad\text{with}\quad M_z(w)=E^{0,1}.
$$
However, $(\partial\pi^T)$ says that
$$
\left[
\begin{pmatrix}
C_0^T&F
\end{pmatrix}
\right]=\left[
\begin{pmatrix}
C^T&F
\end{pmatrix}
\right].
$$
Note that the linear matrix
$$
\begin{pmatrix}
C_0^T&C^T&F
\end{pmatrix}
$$
is not $1$-generic; if it were, then its $q$-minors would be linearly independent (see \Cref{structure}(1) and (2)), a contradiction to $\left[
\begin{pmatrix}
C_0^T&F
\end{pmatrix}
\right]=\left[
\begin{pmatrix}
C^T&F
\end{pmatrix}
\right]$. Therefore, by \Cref{LinCombi}, $C_0$ is a linear combination of $C$ and columns of $F$, and so one can transform $M_z$ into
$$
M_z=
\begin{pmatrix}
b &d&E\\
C^T&E^T&F
\end{pmatrix}
\quad\text{with}\quad M_z(w)=E^{0,1}.
$$
Also, applying $D^w_z$ to $([M_{\mathbb T_wX}],[M_z])$, we have 
$$
M_w=
\begin{blockarray}{cccc}
&1&1&q\\
\begin{block}{c(ccc)}
1&a&b&C\\
q&C^T&E^T&F\\
\end{block}
\end{blockarray}
\quad\text{with}\quad M_w(z)=E^{0,0}
$$
up to the natural action. Then, the linear matrix
$$
M=
\begin{pmatrix}
a&b&C\\
b&d&E\\
C^T&E^T&F
\end{pmatrix}
$$
is evidently a $\Gamma$-regular $q$-Veronese matrix of $X$ and satisfies 
$$
\pi_z^T[M]=[M_z],\quad\pi_w^T[M]=[M_w],\quad\partial_z[M]=[M_{\mathbb T_zX}]\quad\text{and}\quad\partial_w[M]=[M_{\mathbb T_wX}].
$$
In conclusion, we have constructed the inverse of the map $V_q(X,\Gamma)\to V$ above.
\end{proof}

\bigskip

\subsection{Determinantal presentations}

In this subsection, we give the definition and examples of determinantal presentations. The examples consist of those of higher secant varieties to the projective varieties in \Cref{fundaEx} and show us what happens in our main theorems.

\begin{Def}\label{scroll}
One says $S^q(X)$ to be of
\begin{enumerate}
\item {\it scroll type} if $X$ has an $(e,q)$-scroll matrix $M$; and

\item {\it Veronese type} if $X$ has a $q$-Veronese matrix $M$ with $e=3$. 
\end{enumerate}
In both cases, $M$ is called a {\it determinantal presentation} of $S^q(X)$.
\end{Def}

\begin{Ex}\label{RNS}
Let us consider any rational normal scroll $X=S(a_1,\ldots,a_n)$. Then it is defined by the vector bundle $E=\bigoplus_{i=1}^n\mathcal O(a_i)$ on $\mathbb P^1$ and its set $H^0(\mathbb P^1,E)=\bigoplus_iH^0(\mathbb P^1,\mathcal O(a_i))$ of global sections. More precisely, when $H$ is a hyperplane section of $X$ and $F$ is a ruling $(n-1)$-plane in $X$, if $\{s,t\}$ is a basis for $H^0(X,\mathcal O_X(F))$, and if $\lambda_i$ is the generator of the direct summand in $H^0(X,\mathcal O_X(H-a_iF))=\bigoplus_jH^0(\mathbb P^1,\mathcal O(a_j-a_i))$ with respect to the index $j=i$, then the global sections 
$$
x_{i,j}=\lambda_is^{a_i-j}t^j\in H^0(X,\mathcal O_X(H))
$$ 
give the standard embedding of $X$. Let us take into account the linear matrix
$$
M=
\begin{blockarray}{cccccccc}
\otimes&\lambda_1s^{a_1-q}&\cdots&\lambda_1t^{a_1-q}&\cdots&\lambda_ns^{a_n-q}&\cdots&\lambda_nt^{a_n-q}\\ \cline{1-8}
\begin{block}{c(ccccccc)}
s^q&x_{1,0}&\cdots&x_{1,a_1-q}&&x_{n,0}&\cdots&x_{n,a_n-q}\\
\vdots&\vdots&&\vdots&\cdots&\vdots&&\vdots\\
t^q&x_{1,q}&\cdots&x_{1,a_1}&&x_{n,q}&\cdots&x_{n,a_n}\\
\end{block}
\end{blockarray}
$$
determined by the multiplication map 
$$
H^0(X,\mathcal O_X(qF))\otimes H^0(X,\mathcal O_X(H-qF))\to H^0(X,\mathcal O_X(H)).
$$
By its nature, $M$ is $1$-generic and has $2$-minors in $I_X$. It is well known that $e+q$ is equal to the number of columns of $M$, hence $M$ is an $(e,q)$-scroll matrix of $X$. Consequently, higher secant varieties to any rational normal scroll is of scroll type.

But the $q$-secant variety to the rational normal curve in $\mathbb P^{2q+2}$ is of Veronese type at the same time; the determinantal presentation is the linear matrix
$$
M=
\begin{blockarray}{cccc}
\otimes &s^{q+1}&\cdots&t^{q+1}\\ \cline{1-4}
\begin{block}{c(ccc)}
s^{q+1}&x_{0}&\cdots&x_{q+1}\\
\vdots&\vdots&&\vdots\\
t^{q+1}&x_{q+1}&\cdots&x_{2q+2}\\
\end{block}
\end{blockarray}
$$
that comes from the mulitiplication map 
$$
H^0(\mathbb P^1,\mathcal O(q+1))\otimes H^0(\mathbb P^1,\mathcal O(q+1))\to H^0(\mathbb P^1,\mathcal O(2q+2)).
$$
\end{Ex}

Now, the rest of linear matrices in this subsection are $1$-generic for the same reason and have rank less than $2$ on projective varieties to be discussed.

\begin{Ex}\label{dP}
The $2$-secant variety to every smooth del Pezzo surface is of scroll type except the embedding of $\mathbb P^1\times\mathbb P^1$ by the linear system $|\mathcal O(2,2)|$; in this case, the $2$-secant variety is of Veronese type.

First, we consider the $3$-Veronese surface $X=\nu_3(\mathbb P^2)\subset\mathbb P^9$ for simplicity. Fix a basis $\langle t_0,t_1,t_2\rangle=H^0(\mathbb P^2,\mathcal O_{\mathbb P^2}(1))$. Note that $H^0(\mathbb P^2,\mathcal O_{\mathbb P^2}(3))$ is generated by $$
x_{[i,j,k]}=t_it_jt_k
$$ 
for all multisets $[i,j,k]$, $0\leq i,j,k\leq 2$. Then the $x_{[i,j,k]}$ form homogeneous coordinates of $\mathbb P^9$. In this coordinate system, the linear matrix
$$
M=
\begin{blockarray}{ccccc}
\otimes&t_0^2&t_0t_1&\cdots&t_2^2\\ \cline{1-5}
\begin{block}{c(cccc)}
t_0&x_{[0,0,0]}&x_{[0,0,1]}&\cdots&x_{[0,2,2]}\\
t_1&x_{[1,0,0]}&x_{[1,0,1]}&\cdots&x_{[1,2,2]}\\
t_2&x_{[2,0,0]}&x_{[2,0,1]}&\cdots&x_{[2,2,2]}\\
\end{block}
\end{blockarray}
$$
is a $(4,2)$-scroll matrix of $X$. By a theorem of Severi \cite{severi1901intorno}, $S^2(X)$ has codimension $4$, hence it is of scroll type with determinantal presentation $M$.

Second, for the blowup $X\subset\mathbb P^{9-s}$ of $s$ points $\Gamma\subset\mathbb P^2$ in general position, its $2$-secant variety is of scroll type due to the linear matrix associated to the multiplication map 
$$
H^0(\mathbb P^2,\mathcal O(1))\otimes H^0(\mathbb P^2,\mathcal I_\Gamma(2))\to H^0(\mathbb P^2,\mathcal I_\Gamma(3)),
$$
where $\mathcal I_\Gamma$ is the ideal sheaf of $\Gamma\subset\mathbb P^2$.

Last, let $X$ be the complete embedding of $\mathbb P^1\times\mathbb P^1$ by the line bundle $\mathcal O(2,2)$. Set a basis $\{s,t\}$ (resp.\ $\{u,v\}$) for the vector space $H^0(\mathbb P^1,\mathcal O(1))$ of linear forms on the first (resp.\ second) factor of $\mathbb P^1\times\mathbb P^1$. So, $X$ is determined by the global sections
$$
x_{i,j}=s^{2-i}t^iu^{2-j}v^j\in H^0(\mathbb P^1\times\mathbb P^1,\mathcal O(2,2)),\quad0\leq i,j\leq 2.
$$
Then, the multiplication map
$$
H^0(\mathbb P^1\times\mathbb P^1,\mathcal O(1,1))\otimes H^0(\mathbb P^1\times\mathbb P^1,\mathcal O(1,1))\to H^0(\mathbb P^1\times\mathbb P^1,\mathcal O(2,2))
$$
corresponds to the linear matrix
$$
\begin{blockarray}{ccccc}
\otimes&su&sv&tu&tv\\ \cline{1-5}
\begin{block}{c(cccc)}
su&x_{0,0}&x_{0,1}&x_{1,0}&x_{1,1}\\
sv&x_{0,1}&x_{0,2}&x_{1,1}&x_{1,2}\\
tu&x_{1,0}&x_{1,1}&x_{2,0}&x_{2,1}\\
tv&x_{1,1}&x_{1,2}&x_{2,1}&x_{2,2}\\
\end{block}
\end{blockarray}
\ .
$$
Having it as the determinantal presentation, $S^2(X)$ is of Veronese type.
\end{Ex}

\begin{Ex}\label{Seg}
The Segre variety $X=\sigma(\mathbb P^a\times\mathbb P^b)$ with $a\leq b$ has $a$-secant variety of scroll type. Let $s_0,\ldots,s_a$ (resp. $t_0,\ldots,t_b$) be a basis for $H^0(\mathbb P^a,\mathcal O(1))$ (resp. $H^0(\mathbb P^b,\mathcal O(1))$). Then, the global sections $$
x_{i,j}=s_it_j\in H^0(\mathbb P^a\times\mathbb P^b,\mathcal O(1,1))
$$ 
induce the embedding of $X$. As easily seen, $S^a(X)$ is of scroll type, and its determinantal presentation is
$$
M=
\begin{blockarray}{ccccc}
\otimes&t_0&t_1&\cdots&t_b\\ \cline{1-5}
\begin{block}{c(cccc)}
s_0&x_{0,0}&x_{0,1}&\cdots&x_{0,b}\\
\vdots&\vdots&\vdots&&\vdots\\
s_a&x_{a,0}&x_{a,1}&\cdots&x_{a,b}\\
\end{block}
\end{blockarray}
\ .
$$

It can be said that the $a$-secant variety $S^a(X)$ stands for the {\it generic} case of scroll type since $M$ is a generic matrix. In particular, for any projective variety $X$, if $S^q(X)$ is of scroll type, then $X$ lies in a cone over a linear section of $\sigma(\mathbb P^q\times\mathbb P^{e+q-1})$.
\end{Ex}

\begin{Ex}\label{2V}
For the $2$-Veronese variety $X=\nu_2(\mathbb P^n)$, its $(n-1)$-secant variety is of Veronese type, but its $n$-secant variety is of scroll type. Let $t_0,\ldots,t_n$ be homogeneous coordinates of $\mathbb P^n$ so that the homogeneous $2$-forms
$$
x_{[i,j]}=t_it_j
$$ 
define the embedding of $X$. Consider the following symmetric linear matrix:
$$
M=
\begin{blockarray}{cccc}
\otimes&t_0&\cdots&t_n\\ \cline{1-4}
\begin{block}{c(ccc)}
t_0&x_{[0,0]}&\cdots&x_{[0,n]}\\
\vdots&\vdots&&\vdots\\
t_n&x_{[n,0]}&\cdots&x_{[n,n]}\\
\end{block}
\end{blockarray}
$$
It is given by the multiplication map
$$
H^0(\mathbb P^n,\mathcal O(1))\otimes H^0(\mathbb P^n,\mathcal O(1))\to H^0(\mathbb P^n,\mathcal O(2)).
$$
Notice that Terracini's lemma yields 
$$
\codim S^{n-1}(X)=3\quad\text{and}\quad\codim S^n(X)=1.
$$ 
By \Cref{structure}(4), $S^{n-1}(X)$ is of Veronese type with determinantal presentation $M$. However, the $n$-secant variety $S^n(X)$ is of scroll type, for the determinant of $M$ defines it. 

As above, $S^{n-1}(X)$ is the {\it generic} case of Veronese type because $M$ is a generic symmetric matrix, which implies that for any projective variety $X$, if $S^q(X)$ is of Veronese type, then $X$ is contained in a cone over a linear section of $\nu_2(\mathbb P^{q+1})$.

For more information, when $q$ is in the range $1\leq q\leq n-2$, the $q$-secant variety to $X$ is of neither scroll type nor Veronese type. Indeed, with assistance of Terracini's lemma, a general $(q-1)$-tangential projection of $X$ is projectively equivalent to the $2$-Veronese variety $\nu_2(\mathbb P^{n-q+1})$ that is not a variety of minimal degree. By \cite[Theorem 4.2(iv)]{ciliberto2006varieties}, $S^q(X)$ does not have minimal degree.
\end{Ex}

\bigskip

\section{Proofs of the main theorems}\label{proofs}

First of all, we prove the uniqueness of determinantal presentations which helps us to glue scroll and Veronese matrices.

\begin{Prop}[Uniqueness]\label{uniqueness}
If $S^q(X)$ is of either
\begin{enumerate}
\item scroll type with $e\geq 2$; or

\item Veronese type, 
\end{enumerate}
then the determinantal presentation is unique up to the natural actions. 
\end{Prop}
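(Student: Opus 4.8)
The plan is to prove uniqueness by reducing to the two extremal cases already identified in the examples: the generic Segre matrix for scroll type (see \Cref{Seg}) and the generic symmetric matrix for Veronese type (see \Cref{2V}). The key structural fact to exploit is that a determinantal presentation $M$ of $S^q(X)$ is a $1$-generic linear matrix whose $(q+1)$-minors cut out $S^q(X)$ ideal-theoretically, and by \Cref{structure}(2) (resp.\ (4)) the ring $S/I_{q+1}(M)$ is a Cohen-Macaulay domain (resp.\ Cohen-Macaulay ring) whose codimension is pinned down by the size of $M$. Thus the \emph{abstract} structure of $M$ is forced; the content of the statement is that the way $M$ records the linear forms of $S$ in its entries is rigid up to the $\GL$-actions.

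First I would treat the scroll case. Let $M$ and $N$ be two $(e,q)$-scroll matrices of $X$, both $1$-generic of size $(q+1)\times(e+q)$ with $I_{q+1}(M)=I_{q+1}(N)=I_{S^q(X)}$. The strategy is to recover intrinsic data from each matrix that cannot depend on the choice of presentation. A $1$-generic $a\times b$ matrix determines, via its rank-one locus, two natural linear systems (the ``row'' and ``column'' spaces): concretely, the $1$-generic condition means that after the $\GL(q+1,e+q)$-action the entries arise from a multiplication map $V_1\otimes V_2\to V$ as in the examples, and the two factors $V_1,V_2$ together with the multiplication are determined by $S^q(X)\subset\mathbb P^r$ itself rather than by $M$. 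I would make this precise by identifying $V_1$ (resp.\ $V_2$) with the space spanned by the generalized rows (resp.\ columns) of $M$, and showing these spaces, as subspaces of $S_1$, depend only on the variety. The passage from the variety back to the matrix is then unique up to choosing bases of $V_1$ and $V_2$, which is exactly the $\GL(q+1,e+q)$-ambiguity. The hypothesis $e\geq 2$ is needed here to rule out degenerate collapses (when $e=1$ a scroll matrix presents a hypersurface and the two factors need not be separately recoverable).

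The Veronese case I would handle in parallel, using \Cref{structure}(4): here $M$ is symmetric with $I_{q+1}(M)=I_{S^q(X)}$ cutting out a codimension-$3$ Cohen-Macaulay ring, and the symmetry forces $V_1=V_2$, so only a single linear system is recorded and the ambiguity is the congruence action $A\mapsto AMA^{T}$, i.e.\ $\GL(q+2)$. The main obstacle, and where I expect the real work to lie, is the intrinsic recovery step: proving that the linear subspaces $V_1,V_2\subseteq S_1$ extracted from the $1$-generic matrix genuinely depend only on $S^q(X)$ and not on the presentation. I would attack this by characterizing these subspaces geometrically --- for instance via the family of linear spaces contracted by the rank-drop loci of $M$, or via the equations of smallest degree (the $2$-minors, which vanish on $X$) whose common zero locus and linear span are presentation-independent. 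Once this intrinsic characterization is in hand, the uniqueness up to the natural $\GL$-actions follows formally, and the two cases are distinguished by whether the recovered data is a single system (Veronese) or a genuine pair (scroll), consistent with the dichotomy in \Cref{detpre}.
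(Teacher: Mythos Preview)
Your proposal identifies a plausible target---recover $V_1,V_2\subseteq S_1$ intrinsically from $S^q(X)$, then argue that two presentations extracting the same pair must differ by $\GL$---but the decisive step is not carried out. You say yourself that the ``intrinsic recovery step'' is ``where the real work lies,'' and then offer only gestures (rank-drop loci, the $2$-minor locus). Neither is made into an argument: the scheme $\{I_2(M)=0\}$ is a cone over a Segre containing $X$, but nothing you have written shows this cone is determined by $S^q(X)$ rather than by $M$; in general $X$ sits in many such cones. Even the preliminary claim that a $1$-generic matrix ``arises from a multiplication map $V_1\otimes V_2\to V$'' is the content of \Cref{M2D}, which needs $X$ smooth and some care with the base divisor $B$. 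As written, the proposal assumes exactly what it must prove.

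The paper's proof takes a completely different, inductive route. It first reformulates uniqueness as $|\Sigma_{e,q}(X,\Gamma)|\le1$ (resp.\ $|V_q(X,\Gamma)|\le1$) for general finite $\Gamma\subset X$, then uses the gluing identifications (\Cref{scrollglue} and \Cref{Veroneseglue}) to embed these sets into products of the corresponding sets for inner and tangential projections of $X$. This reduces the scroll case to $e=2$ and the Veronese case to $q=1$. The base case $e=2$ is exactly where a resolution argument does work: by \Cref{structure}(1)--(2) the matrix $M^T$ is the first syzygy matrix in the Hilbert--Burch resolution of $I_{S^q(X)}$, hence unique up to $\GL$. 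For $V_1(X,\Gamma)$ the paper applies the Veronese gluing once more to reach $V_0$, which is then handled by a short direct computation. So your resolution-based intuition is correct at the bottom of the induction, but the paper never attempts a direct intrinsic characterization of $V_1,V_2$ in higher codimension; it bootstraps from $e=2$ via projections instead.
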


\begin{proof}
It is enough to show that the following always holds:
\begin{enumerate}
\item $|\Sigma_{e,q}(X,\Gamma)|\leq 1$ for $e+q-1$ general points $\Gamma\subset X$ when $e\geq 2$; and

\item $|V_q(X,\Gamma)|\leq 1$ for $q+2$ general points $\Gamma\subset X$ when $e=3$.
\end{enumerate}
For if there were two distinct determinantal presentations $M$ and $M'$ of size $a\times b$, then taking $b-1$ or $b$ general points $\Gamma\subset X$, one would find that they are both $\Gamma$-regular, hence $[M]\neq[M']$ in either $\Sigma_{e,q}(X,\Gamma)$ or $V_q(X,\Gamma)$.

By \Cref{scrollglue,Veroneseglue}, we need only consider
\begin{enumerate}
\item $\Sigma_{2,q}(X,\Gamma)$ in setting of $e=2$ and $q\geq 1$; and

\item $V_1(X,\Gamma)$ in setting of $e=3$ and $q=1$.
\end{enumerate}
If $[M]$ lies in $\Sigma_{2,q}(X,\Gamma)$ when $e=2$, then by (1) and (2) of \Cref{structure}, it is uniquely determined by the minimal free resolution of $I_{S^q(X)}$. If $[M]$ lies in $V_1(X,\Gamma)$ when $e=3$, then its uniqueness holds for the following reason: Indeed, in order to use the gluing for Veronese matrices (\Cref{Veroneseglue}) again, suppose given an element $[N]\in V_0(X_{\mathbb T_zX},\Gamma_{\mathbb T_zX})$ for any point $z\in\Gamma$. Writing $\Gamma=\{z,w,y\}$, we may assume that
$$
N=
\begin{pmatrix}
a&b\\
b&c
\end{pmatrix}
\quad\text{with}\quad N(w)=E^{0,0}\text{ and }N(y)=E^{1,1}.
$$
Refer to the fact that $X_{\mathbb T_zX}$ has codimension at most one in this case. Then $\langle\mathbb T_zX,\mathbb T_wX\rangle$, $\langle\mathbb T_zX,\mathbb T_yX\rangle$ and $X_{\mathbb T_zX}$ are defined by $c$, $a$ and $ac-b^2$, respectively. Thus, $[N]$ is a unique element of $V_0(X_{\mathbb T_zX},\Gamma_{\mathbb T_zX})$.
\end{proof}

The following lemma is necessary when using the Hilbert-Burch theorem, but quite tricky:

\begin{Lem}\label{wild}
Let $M$ be an $a\times b$ linear matrix with $a\geq q+1$ and $b\geq q+2$. Assume that $I_{q+1}(M)\subseteq I_{S^q(X)}$ holds, and $AMB^T$ has no zero $(q+1)$-minors for any $(A,B)\in\GL(a,b)$. Then, $M$ is $1$-generic with $I_2(M)\subseteq I_X$.
\end{Lem}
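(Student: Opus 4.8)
The statement bundles two conclusions: that $M$ is $1$-generic, and that $I_2(M)\subseteq I_X$, i.e.\ that $\rank M(z)\le 1$ for every $z\in X$. The plan is to isolate the rank statement as the essential content, because once $\rank M(z)\le 1$ is known on $X$ the $1$-genericity follows formally from \Cref{LinCombi}. Thus I would split the argument into a soft combinatorial step (deriving $1$-genericity from the rank bound) and the genuinely hard step (establishing the rank bound), and I expect the latter to be the main obstacle.

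\textbf{$1$-genericity from the rank bound.} Assume $I_2(M)\subseteq I_X$ and suppose, for contradiction, that $M$ is not $1$-generic. Since all hypotheses and conclusions are invariant under the $\GL(a,b)$-action, I would show that a linear matrix whose $2$-minors vanish on $X$ and which fails to be $1$-generic can, after a suitable group action, be brought to a form with an identically zero generalized row or column. Concretely, one deletes columns one at a time and applies \Cref{LinCombi} at the first stage where the surviving block is $1$-generic; this yields a vector $w_0$ with $Mw_0=0$ identically (or, after transposing, an identically vanishing generalized row), the fiddly edge cases being handled by descending induction on the number of columns. A zero generalized line is fatal: because $a\ge q+1$ and $b\ge q+2$, one may select $q+1$ rows and $q+1$ columns including the zero line, and the resulting $(q+1)$-minor of $AMB^T$ vanishes identically, contradicting the hypothesis that $AMB^T$ has no zero $(q+1)$-minor for any $(A,B)$. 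Hence $M$ is $1$-generic.

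\textbf{The rank bound (the crux).} Let $\rho$ be the rank of $M(z)$ at a general point $z\in X$. Since $X\subseteq S^q(X)$ and $I_{q+1}(M)\subseteq I_{S^q(X)}$, the $(q+1)$-minors already vanish at each $z\in X$, so $\rho\le q$; the goal is the sharper bound $\rho\le 1$. I would argue by contradiction, assuming $\rho\ge 2$, and play the two hypotheses against one another. Fixing $q-1$ general points and forming $m_0=M(z_1)+\dots+M(z_{q-1})$ of some rank $s\le q-1$, the containment $z_1+\dots+z_{q-1}+z\in S^q(X)$ forces $\rank\bigl(m_0+M(z)\bigr)\le q$ for general $z\in X$; after normalizing $m_0$ by the group action, a Schur-complement reading of this inequality bounds the rank of a deleted (projected) matrix by $q-s$, which is exactly the $\partial$-type descent of \Cref{operate} lowering the secant order and opening an induction on $q$ (the base case $q=1$ being the hypothesis $I_2(M)\subseteq I_{S^1(X)}=I_X$ itself). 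The indispensable role of hypothesis (ii) is to exclude the \emph{compression} configurations in which $\rho$ could remain $\ge 2$ while every general secant point still has rank $\le q$: any such degeneracy would, after a group action, expose a block of zeros large enough to kill a $(q+1)$-minor, contradicting (ii) just as in the previous paragraph.

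\textbf{Main obstacle.} The delicate point, and the reason the lemma is labelled tricky, is precisely this reconciliation. Hypothesis (ii) is a statement about the generic rank of $M$ over the whole of $\mathbb P^r$ — it forces that rank to be at least $q+1$, and in a uniform, degeneracy-free way (compare \Cref{structure}) — whereas the rank one must pin down lives only over the special subvariety $X$, where a priori it can be as large as $q$. Running the induction is further complicated because the projection operations of \Cref{operate} presuppose $\rank M(z)=1$ at the center of projection, which is exactly what is being proved; so the descent must be arranged so that each projection step is \emph{licensed} by (ii) ruling out a zero generalized line in the projected matrix, rather than by an a priori rank-one assumption. Showing that the failure of $\rho\le 1$ invariably manufactures a forbidden vanishing $(q+1)$-minor is the heart of the matter, and I expect the careful bookkeeping of these projections and Schur complements to be where the real work lies.
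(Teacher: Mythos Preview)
Your plan inverts the paper's order, and that inversion is where the gap lies. The paper proves $1$-genericity \emph{first}, by a Koszul-cohomology argument: assuming $M$ is not $1$-generic, it writes down an explicit Koszul class $\gamma=\sum_{i\ge i_0}(-1)^i l_i\otimes\Delta_i\in K_{1,q+1}(I_{S^q(X)},V)$ whose induced map $V^\ast\to (I_{S^q(X)})_{q+1}$ has rank $<q+2$, contradicting \cite[Corollary~4.3]{choe2022matryoshka}. Only \emph{after} $1$-genericity is in hand does the paper prove $I_2(M)\subseteq I_X$, and that second step uses $1$-genericity essentially (``minors of $M$ having various sizes are all nonzero''): from a nonvanishing $2$-minor on $X$ one manufactures a nonzero form of degree $\le q-1$ in $I_{S^{q-1}(X_{\mathbb T_zX})}$, which is impossible for a nondegenerate variety.

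Your ``soft'' step --- deducing $1$-genericity from $I_2(M)\subseteq I_X$ via \Cref{LinCombi} --- is correct and pleasantly elementary: if $v_0^TMw_0=0$ with $v_0^TM\neq 0$, the $2$-minor trick plus primality of $I_X$ and nondegeneracy force $Mw_0=0$, hence a zero generalized column and a vanishing $(q+1)$-minor. But this buys you nothing unless you already have the rank bound, and your argument for the rank bound does not close. The assertion that $m_0=M(z_1)+\cdots+M(z_{q-1})$ has rank $s\le q-1$ presupposes $\rho\le 1$, which is exactly the contradiction hypothesis; without it you only know $s\le q$. More seriously, you yourself note that the $\partial$-type descent of \Cref{operate} requires $\rank M(z)=1$ at the projection center, and you offer no mechanism to replace that --- the suggestion that hypothesis~(ii) will ``license'' each step by ruling out zero lines in the projected matrix is not substantiated, and it is unclear how to even define the projected matrix, let alone verify both inductive hypotheses for it, when $\rho\ge 2$.

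In short: the paper breaks the circularity you identify by importing an external structural fact about Koszul classes of $I_{S^q(X)}$, and then runs the rank-bound argument with $1$-genericity as input. Your route would need an independent proof of the rank bound that avoids $1$-genericity, and the sketch you give does not supply one.
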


\begin{proof}
In this proof, we will use the Koszul cohomology and a Koszul class; for their definitions, we cite \cite{green1984koszul} and \cite{aprodu2010koszul}. If $M$ were not $1$-generic, then we would be able to construct a nonzero Koszul class $\gamma\in K_{1,q+1}(I_{S^q(X)},V)$ such that the rank of the induced map $\gamma:V^\ast\to(I_{S^q(X)})_{q+1}$ is less than $q+2$, a contradiction by \cite[Corollary 4.3]{choe2022matryoshka}. Indeed, we may assume that the size of $M$ is $(q+1)\times(q+2)$, and to the contrary, suppose that for some integer $i_0\geq 1$, the first row $(l_0,l_1,\ldots,l_{q+1})$ of $M$ satisfies 
$$
\text{$l_i=0$ for all $0\leq i<i_0$, but $l_{i_0},\ldots,l_{q+1}$ are linearly independent.}
$$ 
Then by assumption, the Koszul class
$$
\gamma=\sum_{i=i_0}^{q+1}(-1)^il_i\otimes\Delta_i
$$
is the desired one, where $\Delta_i$ is the $(q+1)$-minor of $M$ obtained by deleting the column $M_{\ast,j}$. Thus, $M$ is $1$-generic.

Note that minors of $M$ having various sizes are all nonzero. Now, if $I_2(M)\not\subseteq I_X$ held, then for a general point $z\in X$, one would find a hypersurface of degree at most $q-1$ containing $S^{q-1}(X_{\mathbb T_zX})$, a contradiction. Therefore, $I_2(M)$ is contained in $I_X$.
\end{proof}

We are ready to prove our main theorems.

\begin{proof}[\bf Proof of \Cref{detpre}]
We will prove a stronger statement: For a general $(q-1)$-tangential projection $X_\Lambda$ of $X$,
\begin{enumerate}
\item[(S)] if $X_\Lambda$ is a rational normal scroll together with $e\geq 2$, then 
$$
|\Sigma_{e,q}(X,\Gamma)|=1
$$
holds for $e+q-1$ general points $\Gamma\subset X$; and

\item[(V)] if $X_\Lambda$ is a cone over the $2$-Veronese surface, then 
$$
|V_q(X,\Gamma)|=1
$$
holds for $q+2$ general points $\Gamma\subset X$. 
\end{enumerate}
Note that if there is an $(e,q)$-scroll matrix of $X$, then $X_\Lambda$ becomes a rational normal scroll, and that by the uniqueness (\Cref{uniqueness}), it is enough to show the existence of an element in either $\Sigma_{e,q}(X,\Gamma)$ or $V_q(X,\Gamma)$.

(S) We use double induction on $e\geq 2$ and $q\geq 1$. Refer to the fact that when $X_\Lambda$ is a rational normal scroll, so is a general $(q-1)$-tangential projection of $X_z$ for a general point $z\in X$.

First, consider the case where either $e=2$ or $q=1$. Suppose that $e=2$. By \cite[Theorem 1.1]{choe2022matryoshka}, $I_{S^q(X)}$ has minimal free resolution
$$
\begin{tikzcd}
S^{q+2}(-q-1) & S^{q+1}(-q-2) \ar[l,"M^T",swap] & 0 \ar[l].
\end{tikzcd}
$$
The Hilbert-Burch theorem and \Cref{wild} tell us that $M$ is a $(2,q)$-scroll matrix of $X$, and after taking $q+1$ general points $\Gamma\subset X$, the linear matrix $M$ becomes $\Gamma$-regular. Assume that $q=1$. Since $X$ is a rational normal scroll, it has an $(e,1)$-scroll matrix $M$. For $e$ general points $\Gamma\subset X$, the set $\Sigma_{e,1}(X,\Gamma)$ contains $[M]$ as an element.

Second, we see the case where $e=3$ and $q\geq 2$. Proceed to apply the gluing for scroll matrices II (\Cref{scrollglueII}) so that it shall suffice to show the following: For two general points $z,w\in X$, each of two sets
$$
\Sigma'_{3,q-2}(X_{\langle\mathbb T_zX,\mathbb T_wX\rangle},\emptyset)\quad\text{and}\quad\Sigma'_{2,q-1}(X_{\langle\mathbb T_zX,w\rangle},\emptyset)
$$
has at most one element. It is true by the uniqueness (\Cref{uniqueness}) together with the definition of $\Sigma'_{3,0}(X_{\langle\mathbb T_zX,\mathbb T_wX\rangle},\emptyset)$ for the subcase $q=2$. Consequently, we get an (unique) element $[M]\in\Sigma_{3,q}(X,\{z,w\})$, and it lies in $\Sigma_{3,q}(X,\Gamma)$ after choosing $q+2$ general points $\Gamma\subset X$.

Last, for the case $e\geq 4$, we may use the gluing for scroll matrices I (\Cref{scrollglue}), hence $\Sigma_{e,q}(X,\Gamma)$ is not empty for $e+q-1$ general points $\Gamma\subset X$. Therefore, we have finished the proof of (S).

(V) Note that for a general inner projection $X_z$, its general $(q-1)$-tangential projection is a rational normal scroll. By induction on $q\geq 1$ and the part (S), the gluing for Veronese matrices (\Cref{Veroneseglue}) works the same way. We are done.
\end{proof}

A proof of \Cref{detpresm} is given as follows:

\begin{proof}[\bf Proof of \Cref{detpresm}]
It is straightforward by \Cref{M2D} below; let $L$ and $M$ be the line bundle $\mathcal O_X(1)$ and the determinantal presentation induced by \Cref{detpre}, respectively.
\end{proof}

It is well known that on any projective variety $X$, a decomposition $L=L_1\otimes L_2$ of a line bundle $L$ into two line bundles $L_i$ yields a matrix $M$ with entries in $H^0(X,L)$ by using the multiplication map
$$
H^0(X,L_1)\otimes H^0(X,L_2)\to H^0(X,L).
$$
After choosing bases $\langle s_0,\ldots,s_{a-1}\rangle=H^0(X,L_1)$ and $\langle t_0,\ldots,t_{b-1}\rangle=H^0(X,L_2)$, $M$ can be explicitly defined by 
$$
M_{i,j}=s_it_j.
$$ 
Then in the case $L=\mathcal O_X(1)$, it is $1$-generic and satisfies that $2$-minors of $M$ are all zero on $X$. The following is a sort of reverse process:

\begin{Lem}\label{M2D}
Assume that $X$ is smooth, and let $M$ be a $1$-generic $a\times b$ linear matrix with $I_2(M)\subseteq I_X$. Then, for the line bundle $L=\mathcal O_X(1)$, the projective variety $X$ admits two linear systems $|V_i|\subseteq|L_i|$ and an effective divisor $B\subset X$ such that
\begin{enumerate}
\item we have
$$
L(-B)=L_1\otimes L_2\quad\text{and}\quad M_{i,j}=s_it_ju
$$
for some bases $\langle s_0,\ldots,s_{a-1}\rangle=V_1$, $\langle t_0,\ldots,t_{b-1}\rangle=V_2$ and a defining section $u$ of $B$;

\item if $M$ is symmetric, then 
$$
L_1=L_2\quad\text{and}\quad s_i=\alpha t_i
$$ 
for some common scalar $\alpha\neq 0$; and

\item each of $|V_i|$ has no fixed components.
\end{enumerate}
\end{Lem}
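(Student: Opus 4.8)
The plan is to read the factorization directly off divisors of zeros on $X$, using that $X$ is smooth, hence integral and locally factorial, so that effective divisors have well-defined greatest common divisors and linear equivalence is governed by $\operatorname{Pic}X$. First I would restrict the entries to $X$: since $M$ is $1$-generic (and $X$ is nondegenerate), each $M_{i,j}$ is a nonzero section of $L$, with effective divisor of zeros $E_{i,j}=\operatorname{div}(M_{i,j})$. The hypothesis $I_2(M)\subseteq I_X$ says every $2$-minor vanishes on $X$, i.e.\ $M_{i,j}M_{k,l}=M_{i,l}M_{k,j}$ in $H^0(X,L^{2})$; comparing divisors of zeros turns this into the additive relation
$$
E_{i,j}+E_{k,l}=E_{i,l}+E_{k,j}\qquad\text{for all }i,j,k,l .
$$

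Next I would split off the common part. Put $B=\gcd_{i,j}E_{i,j}$, let $u$ be a defining section of $B$, and set $M'_{i,j}=M_{i,j}/u\in H^0(X,L')$ with $L'=L(-B)$; the divisors $E'_{i,j}=E_{i,j}-B$ are effective, have trivial gcd, and obey the same additive relation. Define $A_i=\gcd_j E'_{i,j}$ and $C_j=\gcd_i E'_{i,j}$. The key claim is $E'_{i,j}=A_i+C_j$, which I would verify one prime divisor $\Gamma$ at a time: the nonnegative integers $m_{i,j}=\operatorname{ord}_\Gamma E'_{i,j}$ satisfy $m_{i,j}+m_{k,l}=m_{i,l}+m_{k,j}$, so $m_{i,j}=a_i+c_j$ with $a_i=m_{i,0}-m_{0,0}$ and $c_j=m_{0,j}$; after the unique normalization making $\min_i a_i=\min_j c_j=0$ (possible because $\min_{i,j}m_{i,j}=0$ by the gcd condition) one gets $\operatorname{ord}_\Gamma A_i=a_i$ and $\operatorname{ord}_\Gamma C_j=c_j$, whence the claim.

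Then I would pass from divisors to line bundles. Since $A_i+C_j=E'_{i,j}$ lies in $|L'|$ for all $i,j$, fixing $j$ shows all $A_i$ are linearly equivalent; they therefore define a single line bundle $L_1$, and likewise the $C_j$ define $L_2$, with $L_1\otimes L_2=L'$. Choosing sections $s_i\in H^0(X,L_1)$ and $t_j\in H^0(X,L_2)$ with $\operatorname{div}(s_i)=A_i$, $\operatorname{div}(t_j)=C_j$, the sections $s_it_j$ and $M'_{i,j}$ of $L'$ have equal divisors, so $M'_{i,j}=\lambda_{i,j}s_it_j$ with $\lambda_{i,j}\in\Bbbk^\ast$. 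The additive relation, now read multiplicatively via $M'_{i,j}M'_{k,l}=M'_{i,l}M'_{k,j}$, forces $\lambda_{i,j}=p_iq_j$, and absorbing $p_i$ into $s_i$ and $q_j$ into $t_j$ yields $M'_{i,j}=s_it_j$, i.e.\ $M_{i,j}=s_it_ju$. Setting $V_1=\langle s_0,\dots,s_{a-1}\rangle$ and $V_2=\langle t_0,\dots,t_{b-1}\rangle$, $1$-genericity rules out a vanishing linear combination of the rows (resp.\ columns), so the $s_i$ (resp.\ $t_j$) are linearly independent and hence a basis; moreover the fixed component of $|V_1|$ is bounded by $\gcd_i A_i=0$ (resp.\ $\gcd_j C_j=0$), giving (1) and (3).

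For the symmetric statement (2), $M=M^{T}$ gives $E_{i,j}=E_{j,i}$, hence $m_{i,j}=m_{j,i}$ for every $\Gamma$; since the normalized additive decomposition above is unique, $\operatorname{ord}_\Gamma A_i=\operatorname{ord}_\Gamma C_i$ for all $\Gamma$, i.e.\ $A_i=C_i$ and $L_1=L_2$. Taking $t_i=s_i$ and writing $M'_{i,j}=\lambda_{i,j}s_is_j$ with $\lambda$ now symmetric and of rank one, so $\lambda_{i,j}=\rho\,q_iq_j$, a single rescaling produces $M'_{i,j}=s'_is'_j$ and $s_i=\alpha t_i$ for one common scalar $\alpha$. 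I expect the main obstacle to be exactly the divisorial step: proving $E'_{i,j}=A_i+C_j$ and that the $A_i$ (resp.\ $C_j$) are all linearly equivalent, so that $L_1,L_2$ are genuine line bundles. This is precisely where smoothness is indispensable---local factoriality is what legitimizes the prime-by-prime gcd argument and the Weil/Cartier identification---whereas the remaining scalar bookkeeping is routine.
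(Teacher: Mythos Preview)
Your argument is correct and complete; the divisor--gcd calculus does exactly what is needed, and the prime-by-prime verification of $E'_{i,j}=A_i+C_j$ together with the normalized uniqueness of the additive decomposition is the heart of the matter. The scalar bookkeeping (factoring the rank-one array $\lambda_{i,j}$, and in the symmetric case writing $\lambda_{i,j}=\rho\,q_iq_j$ and absorbing $\sqrt{\rho}\,q_i$ into $s_i$) is also fine over an algebraically closed field of characteristic zero.

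The paper's proof reaches the same conclusion by a different and somewhat more conceptual route. Rather than computing gcds of the entry-divisors, it observes that the columns of $M$ are proportional on $X$ and hence define a single rational map $\phi:X\dashrightarrow\mathbb P^{a-1}$, $\phi=(M_{0,j}:\cdots:M_{a-1,j})$; smoothness is then invoked (via the standard fact, e.g.\ \cite[p.~492]{griffiths1978principles}) to write $\phi=(s_0:\cdots:s_{a-1})$ for sections of a line bundle $L_1$ with base locus of codimension $>1$. One then sets $t^0_j=M_{i,j}/s_i$, defines $B$ as the fixed part of $\langle t^0_0,\dots,t^0_{b-1}\rangle$, and divides by it to get $L_2$ and the $t_j$. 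The symmetric case follows because the row-map and column-map coincide. In effect the paper builds $L_1$ first and lets $B$ fall out at the end, whereas you strip off $B$ first and then split the remainder symmetrically into $A_i+C_j$. Your approach is more explicit and entirely self-contained (no appeal to an external reference for resolving rational maps), and it makes the role of local factoriality transparent; the paper's approach is shorter and avoids the prime-by-prime bookkeeping. Both rest on the same use of smoothness: for you, to make gcds of Weil divisors meaningful and to identify Weil and Cartier divisors; for the paper, to resolve a rational map by a line bundle with sections having no common divisor.
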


\begin{proof}
By the property of $M$, we have a well-defined rational map $\phi:X\dashrightarrow\mathbb P^{a-1}$ such that 
$$
\phi(z)=(M_{0,j}(z):\cdots:M_{a-1,j}(z))
$$ 
for any $0\leq j\leq b-1$. Since $X$ is smooth, there exists a vector
$$
(s_0,\ldots,s_{a-1})
$$
for some global sections $s_i$ of a line bundle $L_1$ on $X$ such that it gives
$$
\phi=(s_0:\cdots:s_{a-1})\quad\text{with}\quad\codim\text{Bs}(s_0,\ldots,s_q)>1
$$ 
and is unique up to a scalar multiple (see \cite[p.\ 492]{griffiths1978principles}), where $\text{Bs}$ means the base locus. One finds that the $s_i$ are linearly independent because $M$ is $1$-generic. Take 
$$
V_1=\langle s_0,\ldots,s_{a-1}\rangle\subseteq H^0(X,L_1).
$$
Then since $\codim\text{Bs}|V_1|>1$, for each $j$, the rational section 
$$
\frac{M_{0,j}}{s_0}=\cdots=\frac{M_{a-1,j}}{s_{a-1}}
$$ 
of $L\otimes L_1^{-1}$ becomes a global section $t^0_j\in H^0(X,L\otimes L_1^{-1})$. Take $B$ to be the fixed component of $\langle t^0_0,\ldots,t^0_{b-1}\rangle$, and let $u\in H^0(X,\mathcal O_X(B))$ be its defining section so that we obtain global sections $t_j=t^0_j/u\in H^0(X,L(-B)\otimes L_1^{-1})$. Consequently, we have
$$
M_{i,j}=s_it_ju.
$$
Put
$$
L_2=L(-B)\otimes L_1^{-1}\quad\text{and}\quad V_2=\langle t_0,\ldots, t_{b-1}\rangle\subseteq H^0(X,L_2).
$$ 
Note that $\codim\text{Bs}|V_2|>1$ holds, and the $t_j$ are linearly independent.

Now, suppose that $M$ is symmetric. As the rational map $\phi$ is equal to another rational map 
$$
\psi=(M_{i,0}:\cdots:M_{i,a-1})=(t_0:\cdots:t_{a-1})
$$ 
for any $0\leq i\leq a-1$, there is a nonzero scalar $\alpha$ such that $s_i=\alpha t_i$ for every $i$.
\end{proof}

\bigskip

\section{Comments}\label{comments}

\begin{Rmk}\label{nonempty}
In \Cref{detpresm}, the effective divisor $B$ is possibly nonempty as follows: 
\begin{enumerate}
\item Consider a rational normal scroll $X=S(a_1,\ldots,a_n)$ with $n\geq 2$ and $a_i\geq 1$, and suppose that 
$$
a_1,\ldots,a_{n-1}<q,\quad\text{but}\quad a_n\geq 2q+1.
$$ 
Let us use the notations in \Cref{RNS}. Note that $S^q(X)$ is scroll type with determinantal presentation
$$
\begin{pmatrix}
x_{n,0}&\cdots&x_{n,a_n-q}\\
\vdots&&\vdots\\
x_{n,q}&\cdots&x_{n,a_n}
\end{pmatrix}
.
$$
Since $x_{n,j}=\lambda_ns^{a_n-j}t^j$ on $X$, one has
$$
B=\{\lambda_n=0\}\in|H-a_nF|,
$$ 
and the decomposition in \Cref{detpresm} corresponds to $H-B\sim qF+(a_n-q)F$.

\item Let $X\subset\mathbb P^7$ be a Roth surface (see \cite[Definition 3.1]{ilic1998geometric}) with associated rational normal scroll $S(0,0,5)$ and the vertex line $\ell$ of $S(0,0,5)$. By definition, $X$ is a smooth surface together with $\ell\subset X\subset S(0,0,5)$. We claim that $S^2(X)$ is a $2$-secant variety of minimal degree with codimension $2$, and
$$
B=\ell.
$$
A theorem of Severi \cite{severi1901intorno} says that $\dim S^2(X)=5$. Therefore, $S^2(X)$ and $S^2(S(0,0,5))$ are the same, and thus $S^2(X)$ has minimal degree of codimension $2$. Notice that $\ell$ is cut out by entries of the determinantal presentation of $S^2(S(0,0,5))$.
\end{enumerate}
\end{Rmk}

\begin{Rmk}\label{singular}
One may extend \Cref{detpresm} for the case $X$ singular by considering a resolution $f:Y\to X$ of singularities (or more generally, any surjective morphism from a smooth complete variety); for instance, putting $L=f^\ast\mathcal O_X(1)$, one would get the same decomposition
$$
L(-B)=L_1\otimes L_2
$$
on $Y$.
\end{Rmk}

\begin{Rmk}
In \Cref{detpresm}, if the linear system $|V|$ is complete, then so are both linear systems $|V_i|$. Assume that $|V|=|L|$. Then using the multiplication map 
$$
H^0(X,L_1)\otimes H^0(X,L_2)\to H^0(X,L)
$$ 
followed by the natural map $H^0(X,L(-B))\to H^0(X,L)$, we have a $1$-generic linear matrix $\widetilde{M}$ on $\mathbb P^r$ of size $a\times b$ whose $2$-minors vanish on $X$, where $a=h^0(X,L_1)$ and $b=h^0(X,L_2)$. 

We first consider the case $\dim|V_1|=q$ (i.e.\ scroll type). If $a>q+1$ held, then for any $(q+2)\times(e+q)$ submatrix $\widetilde{N}$ of $\widetilde{M}$, we would have 
$$
\codim S^{q+1}(X)\geq\codim I_{q+2}(\widetilde{N})=\codim S^q(X)-1,
$$ 
a contradiction by \cite[Proposition 1.2.2]{russo2016geometry}. If $b>e+q$ held, then for any $(q+1)\times(e+q+1)$ submatrix $\widetilde{N}$ of $\widetilde{M}$, we would have 
$$
\codim S^q(X)\geq\codim I_{q+1}(\widetilde{N})=\codim S^q(X)+1,
$$ 
a contradiction. Next, let us see the case $\dim|V_1|=q+1$ (i.e.\ Veronese type). If $a>q+2$ held, then for any $(q+3)\times(q+3)$ principal submatrix $\widetilde{N}$ of $\widetilde{M}$, we would have
$$
\codim S^{q+1}(X)\geq\codim I_{q+2}(\widetilde{N})=3=\codim S^q(X).
$$ 
We have shown that $|V_i|=|L_i|$ for all $i=0,1$ in any event.
\end{Rmk}

\begin{Rmk}
Not every minimal degree $q$-secant variety of codimension $1$ is defined by the determinant of a $(q+1)\times(q+1)$ linear matrix. For example, consider the $3$-Veronese surface $X=\nu_3(\mathbb P^2)$ with $q=3$. Then, $S^3(X)$ is cut out by a quartic equation $f$, namely the {\it Aronhold invariant} of degree $4$. To the contrary, suppose that a $4\times 4$ linear matrix $M$ has determinant $f$. 

We claim that $M$ is a $(1,3)$-scroll matrix of $X$. Indeed, it is well known that the Jacobian ideal $J_f$ of $f$ is contained in $I_3(M)$, and that $\Sing S^3(X)=S^2(X)$ in this case. We have $I_{S^2(X)}=\sqrt{J_f}\subseteq\sqrt{I_3(M)}$ and 
$$
4=\codim I_{S^2(X)}\leq\codim I_3(M),
$$
which means that $\codim I_3(M)=4$ by \cite[Theorem 3]{eagon1962ideals}. Hence, the prime ideal $I_{S^2(X)}$ contains $I_3(M)$. Refer to the fact that for the $4\times 4$ generic linear matrix $\widetilde{M}$, the ideal $I_3(\widetilde{M})$ is a Cohen-Macaulay ideal of codimension $4$ by \cite[Corollary 4]{hochster1971cohen}, and $3$-minors of $\widetilde{M}$ are linearly independent. By the Cohen-Macaulayness, no $3$-minors of $AMB^T$ vanish for any $(A,B)\in\GL(4,4)$ since $AMB^T$ is a linear section of $\widetilde{M}$. Apply \Cref{wild} to conclude that $M$ is a $1$-generic linear matrix satisfying $I_2(M)\subseteq I_X$. 

Thus, by definition, $S^3(X)$ is of scroll type with determinantal presentation $M$. Let $L_i$ be the line bundles on $X$ given by \Cref{M2D} so that $\dim|L_i|\geq 3$. Hence, $L_i=\mathcal O_{\mathbb P^2}(d_i)$ for some integers $d_i\geq 2$, a contradiction to $\mathcal O_X(1)=\mathcal O_{\mathbb P^2}(3)$.
\end{Rmk}

\begin{Rmk}
Suppose that $X$ is a smooth curve $C$ of genus $g$. Let $\delta$ be a positive integer, $L$ any very ample line bundle on $C$ of degree $d>\delta+2g-2$, and $p=d-\delta-g-q+1$. Consider $C$ to be a projective curve embedded by the complete linear system $|L|$. Then, we are able to construct a map
$$
\{\text{linear systems }g^q_\delta\text{ on }C\}\to\Sigma_{p,q}(C,\emptyset)
$$
as follows: Take $|V_1|\subseteq|L_1|$ to be an arbitrary linear system $g^q_d$ on $C$. Write $L_2=L\otimes L_1^{-1}$ and $V_2=H^0(C,L_2)$. Since $\deg L_2>2g-2$, we have $\dim V_2=p+q$ by the Riemann-Roch theorem. As usual, using the multiplication map $V_1\otimes V_2\to H^0(C,L)$, we obtain an element $[M]\in\Sigma_{p,q}(C,\emptyset)$. 

Note that by \Cref{M2D}, this map admits a section whose image consists of base point free linear systems, and that the image $[M]$ above lies in $\Sigma_{p,q}(C,\Gamma)$ for a finite subset $\Gamma\subset C$ if $\Gamma$ is separated by $L_2$, that is, $h^0(C,L_2(-\Gamma))=h^0(C,L_2)-|\Gamma|$.

On the other hand, one could conclude a similar thing for $q$-Veronese matrices of $C$.
\end{Rmk}

\bigskip
 
\noindent{\bf Acknowledgments.}
The authors are supported by the National Research Foundation of Korea (NRF) grant funded by the Korea government (2019R1A2C3010487 for the first author and 2021R1A2C1013851 for the second author).
 
\nocite{*}
\bibliographystyle{amsplain}
\bibliography{Det_pre}

\providecommand{\bysame}{\leavevmode\hbox to3em{\hrulefill}\thinspace}
\providecommand{\MR}{\relax\ifhmode\unskip\space\fi MR }
\providecommand{\MRhref}[2]{%
  \href{http://www.ams.org/mathscinet-getitem?mr=#1}{#2}
}
\providecommand{\href}[2]{#2}
\begin{thebibliography}{10}

\bibitem{aprodu2010koszul}
M.~Aprodu and J.~Nagel, \emph{Koszul cohomology and algebraic geometry},
  vol.~52, American Mathematical Soc., 2010.

\bibitem{castelnuovo1889ricerche}
G.~Castelnuovo, \emph{Ricerche di geometria sulle curve algebriche}, Loescher,
  1889.

\bibitem{choe2022matryoshka}
J.~Choe and S.~Kwak, \emph{A matryoshka structure of higher secant varieties
  and the generalized {B}ronowski's conjecture}, Advances in Mathematics
  \textbf{406} (2022), 108526.

\bibitem{ciliberto2006varieties}
C.~Ciliberto and F.~Russo, \emph{Varieties with minimal secant degree and
  linear systems of maximal dimension on surfaces}, Adv. Math. \textbf{200}
  (2006), no.~1, 1--50.

\bibitem{eagon1962ideals}
J.~A. Eagon and D.~G. Northcott, \emph{Ideals defined by matrices and a certain
  complex associated with them}, Proc. Roy. Soc. London Ser. A \textbf{269}
  (1962), no.~1337, 188--204.

\bibitem{eisenbud1988linear}
D.~Eisenbud, \emph{Linear sections of determinantal varieties}, Amer. J. Math.
  \textbf{110} (1988), no.~3, 541--575.

\bibitem{eisenbud1984linear}
D.~Eisenbud and S.~Goto, \emph{Linear free resolutions and minimal
  multiplicity}, J. Algebra \textbf{88} (1984), no.~1, 89--133.

\bibitem{eisenbud2005restricting}
D.~Eisenbud, M.~L. Green, K.~Hulek, and S.~Popescu, \emph{Restricting linear
  syzygies: algebra and geometry}, Compos. Math. \textbf{141} (2005), no.~6,
  1460--1478.

\bibitem{eisenbud1987varieties}
D.~Eisenbud and J.~Harris, \emph{On varieties of minimal degree}, Proc.
  Symposia Pure Math., vol.~46, 1987, pp.~3--13.

\bibitem{green1984koszul}
M.~L. Green, \emph{Koszul cohomology and the geometry of projective varieties},
  J. Differ. Geom. \textbf{19} (1984), no.~1, 125--171.

\bibitem{griffiths1978principles}
P.~Griffiths and J.~Harris, \emph{Principles of algebraic geometry, {P}ure and
  {A}pplied {M}athematics}, New York: John Wilney and Sons (1978).

\bibitem{hochster1971cohen}
M.~Hochster and J.~A. Eagon, \emph{Cohen-macaulay rings, invariant theory, and
  the generic perfection of determinantal loci}, Am. J. Math. \textbf{93}
  (1971), no.~4, 1020--1058.

\bibitem{ilic1998geometric}
B.~Ilic, \emph{Geometric properties of the double-point divisor}, Trans. Amer.
  Math. Soc. \textbf{350} (1998), no.~4, 1643--1661.

\bibitem{jozefiak1978ideals}
T.~J{\'o}zefiak, \emph{Ideals generated by minors of a symmetric matrix},
  Comment. Math. Helv. \textbf{53} (1978), no.~1, 595--607.

\bibitem{russo2016geometry}
F.~Russo, \emph{On the geometry of some special projective varieties},
  Springer, 2016.

\bibitem{severi1901intorno}
F.~Severi, \emph{Intorno ai punti doppi impropri di una superficie generale
  dello spazio a quattro dimensioni, e a’suoi punti tripli apparenti}, Rend.
  Circ. Mat. Palermo \textbf{15} (1901), no.~1, 33--51.

\end{thebibliography}
\end{document}